\title[ ]{Growth of the eigensolutions of Laplacians  on   Riemannian manifolds II: positivity of the initial energy}
\author{Wencai Liu}
\address[Wencai Liu]{Department of Mathematics, University of California, Irvine, California 92697-3875, USA}\email{liuwencai1226@gmail.com}
\theoremstyle{plain}
\newtheorem{theorem}{Theorem}[section]
\newtheorem{corollary}[theorem]{Corollary}
\newtheorem{lemma}[theorem]{Lemma}
\theoremstyle{definition}
\newtheorem{remark}[theorem]{Remark}
\begin{document}

%%% ----------------------------------------------------------------------

\begin{abstract}

In this paper, energy function is used to investigate  the eigen-solutions of $-\Delta u+ Vu=\lambda u$ on the    Riemannian manifolds.
  %There are a lot of constructions of energy functions corresponding to the eigen-equations.
  We give  a new way  to prove the positivity of the initial energy of energy function,
  which  leads to   a simple way to obtain the
  growth of eigen-solutions.

\end{abstract}
%%% ----------------------------------------------------------------------
\maketitle
%%% ----------------------------------------------------------------------
%%%%%%%%%%%%%%%%%%%%%%%%%%%%%%%%%%%%%%%%%%%%%%%%%%%%%%%%%%%%%%%%%%%%%%%%%%
%INTRODUCTION%%%%%%%%%%%%%%%%%%%%%%%%%%%%%%%%%%%%%%%%%%%%%%%%%%%%%%%%%%%%%
%%%%%%%%%%%%%%%%%%%%%%%%%%%%%%%%%%%%%%%%%%%%%%%%%%%%%%%%%%%%%%%%%%%%%%%%%%
\section{Introduction and main results}
The present   paper is the second  in a series \cite{LiuI} that discusses   the growth of eigen-solutions  of Laplacian on connected  non-compact complete Riemannian manifolds.

Let $(M,g)$ be a   connected  $n$-dimensional noncompact complete Riemannian manifold ($n\geq 2$).
The Laplace-Beltrami operator   on   $(M,g)$ is essentially self-adjoint on $C^{\infty}_0(M)$.  %Sometimes we   use $ \Delta$ and $M$    for simplicity.
We  denote the self-adjoint extension by $\Delta$ (Laplacian).
Assume there exists $U\subset M$ such that  $M\backslash U$  is  connected
 and the induced outward normal exponential map $\exp_{\partial U}^{\perp} : N^{+}(\partial U) \to M - \overline{U}$   is diffeomorphism, where $N^{+}(\partial U) = \{ v\in T(\partial U) \mid v {\rm ~is~outward~normal~to~}\partial U \}$.
Let $r$ be the distance function from $\partial U$ defined on the end $M-\overline{U}$.
This is the same  setting  as in \cite{kumura2010radial,kumuraflat,LiuI}.

An interesting problem  is to investigate  if the   eigenvalues  can be embedded into the essential spectrum of   Laplacian on Riemannian manifolds.
We  focus on two popular manifolds-asymptotically flat case and asymptotically hyperbolic case, which are distinguished   by the radial curvature $K_{\rm rad}(r)$. Radial curvature is a very important quantity in spectral geometry \cite{greene2006function}.
Roughly speaking, asymptotically flat manifold is characterized by  the radial curvature $K_{\rm rad}(r)= O(r^{-\alpha})$ with some $\alpha>0$, and respectively asymptotically hyperbolic manifold  is characterized by the radial curvature $K_{\rm rad}(r)=-1+O(r^{-\alpha})$.
This topic was heavily studied by various authors in geometry and analysis  \cite{ISad,ISI,ISII,ISJFA,donnelly1990negative,pinsky1979spectrum,kumuraflat,donnelly1979pure,kumura1997essential,donnelly1992,donnelly1999,escobar1992spectrum,MR2951504,MR2914877,MR3100776}.
 %See \cite{LiuI,kumura2010radial,donnelly2010spectral,kumuraflat} for  details about the history.

For the asymptotically hyperbolic case,
the sharp spectral  transition is given by Kumura   \cite{kumura2010radial}.
 He   excludes eigenvalues greater than $\frac{(n-1)^2}{4}$ under the assumption that $K_{\rm rad}(r)  =-1+ o(r^{-1})$, and  also  constructs a manifold
  for which exact  an eigenvalue $\frac{(n-1)^2}{4} + 1$ is embedded  into its essential spectrum
 $[ \frac{(n-1)^2}{4} , \infty )$ with the radial curvature  $K_{\rm rad}(r)  = -1+O(r^{-1})$.
Jitomirskaya and Liu   constructed examples which show that dense eigenvalues  and singular continuous spectrum can be embedded into the essential spectrum \cite{jl1,jl2}.

 For the asymptotically flat case,
  the authors in \cite{donnelly1992,donnelly2010spectral,donnelly1999,escobar1992spectrum} showed the absence of positive eigenvalues of    Laplacian
 on  complete Riemannian manifold with a pole, under  the   assumptions  more or less like   $|K_{\rm rad}(r)|   \leq \frac{\delta}{1+r^2}$ for small $\delta$.
 % Here Riemannian manifold with a pole is stronger than our assumption.
%See Donnelly's review paper \cite{} for more results.
Kumura  obtained a similar result (see Remark \ref{reku}) which works for more general manifolds \cite{kumuraflat}.

We should mention that the spectral theory of free Laplacian on asymptotically hyperbolic and flat manifolds  is more or less parallel to  that of Schr\"odinger operators with decaying potentials \cite{MR2307748}. We refer the readers to our previous paper \cite{LiuI}.

Kumura \cite{kumura2010radial,kumuraflat} achieved his goals  by the careful study of  energy function of  eigen-solutions directly. Our inspiration comes from Kato \cite{kato}.
 His basic  idea is (four steps):  construct  energy function for eigen-equation; set up the positivity of initial energy; prove the monotonicity of energy function with respect to $r$; obtain the growth of eigen-solution.
In our previous paper \cite{LiuI}, we have already used such scheme to set up the growth of eigen-solutions by  a universal construction of energy functions.
As an application, we showed the absence of   eigenvalues in certain circumstances, which improved Kumura's sharp results \cite{kumura2010radial} mentioned before.
 However, we  rely on strong convexity of Hessian of distance function $r$  to set up the positivity (or non-negativity) of the initial energy.
 %which developed  from the scheme of \cite{kato}.
  Such strong convexity  is a big obstacle for     the asymptotically flat manifolds, which prevents  us from getting  sharp bounds.
  Since the energy function on the manifold (even on the Euclidean space in higher dimensions) is not the simple sum of potential energy and kinetic energy any more, the non-negativity of $-\Delta$ can not lead to the positivity (or non-negativity) of initial energy.
 Notice that the positivity of initial energy  for  simply connected manifolds is trivial because the energy function we constructed is  zero at the initial point $r=0$ automatically. This explains why it is  easier to study the spectral theory of Laplacian on simply connected manifolds in some sense.

 The novelty of this paper is to give an effective  and new way to verify the   positivity (or non-negativity) of the initial energy for non-simply connected manifolds.
  The idea   is to increase the potential energy via modifying the potential without breaking the scheme (four steps) in \cite{LiuI,kato}. %by paper \cite{kumuraflat}.
 We show that if the  positivity of the initial energy  fails, by solving a ODE problem,  the weakly exponential decay  ($e^{-c\sqrt{r}}$ with some $c>0$) of the   eigen-solution can be established.
 By using a class of test functions introduced  in \cite{kumuraflat} and   developing the  methods of \cite{kumuraflat}, we prove that $u$ is  exponentially decaying ($e^{-c {r}}$ with some $c>0$) and then is a trivial solution by additional arguments.
 As one of the  applications, the results in \cite{kumuraflat} are recovered without the extra assumptions on the Ricci curvature.
 To  our best  knowledge, this is a new way to prove the positivity of initial energy.
 We only give one kind of such theorems here.
 The gradient version and mixed version   can also be established   by combining with the arguments
 in \cite{LiuI}.
 Moreover, we believe our method has a  wider applicability.

Another approach to investigate the absence of embedded eigenvalues is based on Mourre type
commutator estimates and super-exponential decay estimates of a priori eigenfunction. See \cite{ISad} and references therein.
Ito and Skibsted obtained some criteria for the absence of embedded  eigenvalues  for both asymptotically flat and hyperbolic Riemannian
Laplacians \cite{ISad}, which are similar to the  results in the present paper and our    previous paper \cite{LiuI}. We will say more later.
Ito and Skibsted also set up the scattering theory for Riemannian Laplacians in their series of papers \cite{ISI,ISII,ISJFA,ISIII}.

Let $g$ be the metric and  $\nabla$  be  the covariant derivative. Denote
Hessian of $r$ by $\nabla dr$. % Denote $dx=dg$ for simplicity.
%We try to use the growth of $\nabla dr$ and the to characterize the spectra of Laplacian.
 Let
 \begin{eqnarray*}
 % \nonumber to remove numbering (before each equation)
   M(r) &=& M(r;u) =(\int_{|r(x)|=r}|u(x)|^2 dx)^{\frac{1}{2}},\\\label{GM}
   N(r) &=& N(r;u)=(\int_{|r(x)|=r}|\frac{\partial  u}{\partial r}|^2 dx)^{\frac{1}{2}}. \label{GN}
 \end{eqnarray*}

Our main result in the present paper is
\begin{theorem}\label{Mainthm1}
Let  the potential  $V(r)=V_1(r)+V_2(r)$. Suppose
\begin{equation*}
    |V_1(r)|=\frac{o(1)}{r}, |V_2(r)|=o(1), |\frac{\partial V_2}{\partial r}|=\frac{o(1)}{r},
\end{equation*}
as $r$ goes to infinity.
Suppose
\begin{equation}\label{ajun}
   \liminf_{r\to \infty}[  r \nabla dr-a\hat{g}]\geq 0,
\end{equation}
for some $a>0$,
where $ \hat{g}=g-dr\otimes dr$, and
\begin{equation*}%\label{Gcon2}
 \limsup_{r\to \infty}  r|\Delta r-b-\frac{c}{r}|\leq  \delta,
\end{equation*}
 for some   constants $b,c,\delta$.
Suppose
\begin{equation}\label{Gcons}
    \mu>\delta,2a > \mu+\delta, %a_3>  1+\delta ,
\end{equation}
and
\begin{equation}\label{Gconl}
   \lambda> \frac{b ^2}{4}+ \frac{ {\delta}^2 b ^2}{\mu^2-{\delta}^2}.
\end{equation}
Then we have
\begin{equation*}
   \liminf_{r\to \infty} r^{\mu} [M(r)^2+N(r)^2] = \infty .
\end{equation*}

\end{theorem}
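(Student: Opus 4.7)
The plan is to implement the four-step Kato-Kumura scheme refined in the author's previous paper \cite{LiuI}: construct an energy function $E(r)$ adapted to the equation $-\Delta u + Vu = \lambda u$, prove monotonicity of $E$ in $r$, establish positivity of $E$ at some initial radius $r_0$, and then read off the lower growth bound for $M(r)^2 + N(r)^2$ from these two facts. The first, third, and fourth steps follow the universal construction of \cite{LiuI} almost directly; the substantive novelty is in the second step (positivity), which is where the bulk of the argument will go.

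For the construction and monotonicity, I would define an energy of the schematic form
\begin{equation*}
E(r) = r^{\mu}\bigl( N(r)^2 - \Phi(\lambda,b,c,V,r)\,M(r)^2 + \Psi(r)\!\int_{r(x)=r}\!u\,\tfrac{\partial u}{\partial r}\,d\sigma \bigr),
\end{equation*}
with $\Phi, \Psi$ chosen by the recipe in \cite{LiuI} so that, after decomposing $\Delta = \partial_r^2 + (\Delta r)\partial_r + \Delta_{\partial B_r}$ and substituting the eigen-equation, the resulting expression for $E'(r)$ splits into (i) a coercive angular quadratic form controlled from below by $r\nabla dr - a\hat g \geq 0$, and (ii) a radial quadratic form in $(M,N)$ whose coefficients involve $\mu, a, b, c, \delta, \lambda$. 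The inequality $\mu > \delta$ and $2a > \mu + \delta$ will come out precisely as the non-negativity condition of the angular form (after absorbing the $\Delta r = b + c/r + O(\delta/r)$ error), while $\lambda > b^2/4 + \delta^2 b^2/(\mu^2 - \delta^2)$ will be the discriminant condition for the radial form, obtained by completing the square in the $M$-$N$ coupling. The decay hypotheses on $V_1, V_2, \partial_r V_2$ are calibrated exactly to make the potential contributions to $E'$ a small perturbation absorbable into the strict inequalities in \eqref{Gcons}--\eqref{Gconl}. This yields $E'(r) \geq 0$ for all $r$ sufficiently large, and the final step (deducing $\liminf r^\mu(M^2+N^2) = \infty$ from $E(r) \geq E(r_0) > 0$ combined with the leading $r^\mu N^2$ term) is a Cauchy-Schwarz estimate on the cross term.

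The main obstacle is Step 3, the positivity of $E(r_0)$. On a non-simply-connected manifold the initial surface is nontrivial and $E(r_0)$ need not be non-negative a priori. The strategy, as previewed in the introduction, is by contradiction: assume $E(r_0) \leq 0$ for every admissible choice of $r_0$. Modify $V$ by adding a smooth non-negative bump $W(r) \geq 0$, compactly supported in an annulus near $r_0$ and chosen large enough to force the modified initial energy positive; this modification preserves all structural ingredients of the four-step scheme because $W$ is supported away from infinity. Applying the ODE comparison inherent to the energy identity (viewing $M,N$ as solutions of a coupled linear radial system driven by $V+W$ near $r_0$ and by $V$ outside), conclude a weakly exponential decay bound $M(r) + N(r) \lesssim e^{-c\sqrt{r}}$. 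Then invoke the test-function machinery of Kumura \cite{kumuraflat}, applied to $u$ itself, to bootstrap this to genuine exponential decay $|u(x)| \lesssim e^{-cr}$; this is the technical core of the new argument, and is where I would expect the tightest calculations, since the test functions have to be tuned to the asymptotics of $\nabla dr$ and $\Delta r$ under exactly the hypotheses stated. Finally, a Carleman-type unique-continuation argument (the ``additional arguments'' alluded to in the introduction) shows that a nontrivial eigensolution with exponential decay on the end cannot exist, contradicting the fact that $u$ is a given eigensolution.

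The hard part is unambiguously the upgrade from $e^{-c\sqrt r}$ to $e^{-cr}$ decay and the subsequent triviality step, since both must be carried out using only the one-sided Hessian control \eqref{ajun} and the $\delta$-error in $\Delta r$, without leaning on Ricci-curvature bounds (that is the promised improvement over \cite{kumuraflat}). Once these two deductions are in hand, the remaining steps are routine manipulations of the energy identity from \cite{LiuI}, and the quantitative conditions \eqref{Gcons}--\eqref{Gconl} will fall out of the explicit quadratic-form analysis described above.
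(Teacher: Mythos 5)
Your overall architecture (energy function and monotonicity yielding \eqref{Gcons}--\eqref{Gconl} as angular/discriminant conditions, then a contradiction argument for the initial energy producing $e^{-c\sqrt r}$ decay, an upgrade to $e^{-cr}$ via Kumura-type test functions, and triviality via unique continuation) matches the paper's plan, but the mechanism you propose for the crucial positivity step has a genuine gap. Adding a non-negative bump $W$ compactly supported in an annulus near $r_0$ cannot work as described: outside the support of $W$ the modified and unmodified energy functionals coincide, so ``forcing the modified initial energy positive'' inside the annulus gives you nothing at the radii where you actually need positivity (there the energy is still the one you assumed non-positive); and inside the annulus you lose the monotonicity formula, because $u$ solves the equation with $V$, not $V+W$, so the derivative of the modified energy picks up uncontrolled $W$ and $\partial_r W$ terms. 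Moreover, your route to the $e^{-c\sqrt r}$ bound --- ``viewing $M,N$ as solutions of a coupled linear radial system'' --- is not available: on a general end $M(r),N(r)$ do not satisfy a closed ODE system (the angular terms are exactly the obstruction), so no ODE comparison can be run on them directly, and a compactly supported modification cannot generate decay at infinity in any case.

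What the paper actually does in this step is different and is the real novelty: the ``modification of the potential'' is a tail modification of the potential-energy coefficient $q$ inside the energy functional, namely $q$ is chosen so that $q-\lambda+V_0+V_1+V_2\geq \epsilon/r$ (condition \eqref{Gq}), an excess of size $\epsilon/r$ which is small enough not to destroy the monotonicity/discriminant analysis (it only shifts $\delta$ to $\delta\pm\epsilon$ in \eqref{Gconl}). One then assumes $F(r,0)\leq 0$ for \emph{all} large $r$, integrates the energy over $[r,r_j]$, and integrates the eigen-equation by parts against $v e^{-2\rho}$; combining the two kills the gradient terms and leaves $\int_{S_r} v\,\partial_r v\, e^{-2\rho}dx + \int_{r(x)\geq r}\bigl[\tfrac{\epsilon}{r}v^2+2|\partial_r v|^2\bigr]e^{-2\rho}dx\leq 0$. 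With $G(r)$ the tail integral, Cauchy--Schwarz on the boundary term (the mismatch between the weights $\epsilon/r$ and $2$ is precisely what produces the $\sqrt r$) gives the Riccati-type inequality $G(r)\leq -C\sqrt r\,G'(r)$, hence $G(r)\lesssim e^{-c\sqrt r}$; this, not an ODE system for $M,N$, is the source of the weak exponential decay. The subsequent upgrade is also quantitative rather than a direct application of \cite{kumuraflat} to $u$: one works with $v_m=r^m v$ and chooses $m$ proportional to $r$ (this is the test-function family), and the triviality step uses a second conjugation $\bar v=e^{kr^\theta}v$ with $\theta$ close to $1$ plus unique continuation. Without replacing your bump construction and ODE comparison by an argument of this kind, Step 3 of your proposal does not go through.
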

Based on the universal Theorem \ref{Mainthm1}, we have  several interesting   corollaries.
Letting  $b=0$ in Theorem \ref{Mainthm1}\footnote{By the fact that $\Delta+V$ is essentially selfadjoint, we have $\nabla u\in L^2(M)$ if the eigensolution $u\in L^2(M)$.},   we have
\begin{corollary}\label{Maincor1}
Let  the potential  $V(r)=V_1(r)+V_2(r)$. Suppose
\begin{equation*}
    |V_1(r)|=\frac{o(1)}{r}, |V_2(r)|=o(1), |\frac{\partial V_2}{\partial r}|=\frac{o(1)}{r},
\end{equation*}
as $r$ goes to infinity.
Suppose
\begin{equation*}%\label{Gcon1}
   \liminf_{r\to \infty}[  r \nabla dr-a\hat{g}]\geq 0,
\end{equation*}
for some $a>0$,
where $ \hat{g}=g-dr\otimes dr$, and
\begin{equation*}%\label{Gcon2}
 \limsup_{r\to \infty}  r|\Delta r-\frac{c}{r}|\leq  \delta,
\end{equation*}
 for some   constants $c,\delta$.
Suppose
\begin{equation*}%\label{Gcons}
   \delta<\min\{a,1\}. %a_3>  1+\delta ,
\end{equation*}
Then  $-\Delta+V$ admits no positive eigenvalue.

\end{corollary}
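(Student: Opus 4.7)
The plan is to deduce the corollary from Theorem \ref{Mainthm1} by a direct contradiction argument, using $b = 0$. Assume for contradiction that $\lambda > 0$ is an eigenvalue of $-\Delta + V$ with eigenfunction $u \in L^2(M)$; the footnote guarantees $\nabla u \in L^2(M)$ as well. On the end $M \setminus \overline{U}$, the outward normal exponential map realizes polar-type coordinates $(r,\theta)$ in which the metric splits as $g = dr^2 + g_r$ and the Riemannian volume element factors as $dx = d\sigma_r\, dr$, where $d\sigma_r$ is the induced surface measure on $\{r(x)=r\}$. Fubini's theorem then gives
\begin{equation*}
\int_0^\infty [M(r)^2 + N(r)^2]\, dr \;\leq\; \|u\|_{L^2(M)}^2 + \|\nabla u\|_{L^2(M)}^2 \;<\; \infty.
\end{equation*}

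Next I would apply Theorem \ref{Mainthm1} with $b = 0$. In that theorem, \eqref{Gconl} reduces to $\lambda > 0$, which holds, and \eqref{Gcons} reduces to $\delta < \mu < 2a - \delta$. In order for the conclusion of the theorem to contradict the integrability above, I also require $\mu \leq 1$. The hypothesis $\delta < \min\{a,1\}$ gives both $\delta < 1$ and $\delta < 2a - \delta$, so the open interval $(\delta,\; \min\{1,\, 2a-\delta\})$ is nonempty; pick any $\mu$ in it. Theorem \ref{Mainthm1} then yields
\begin{equation*}
\liminf_{r \to \infty} r^{\mu}\bigl[M(r)^2 + N(r)^2\bigr] = \infty.
\end{equation*}

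Finally, this growth statement is incompatible with the integral bound. Indeed, there exists $R > 0$ with $r^{\mu}[M(r)^2 + N(r)^2] \geq 1$ for all $r \geq R$; since $\mu \leq 1$,
\begin{equation*}
\int_R^\infty [M(r)^2 + N(r)^2]\, dr \;\geq\; \int_R^\infty r^{-\mu}\, dr = \infty,
\end{equation*}
contradicting the finiteness established at the start. Hence no positive eigenvalue exists.

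The real content is all inside Theorem \ref{Mainthm1}, in particular the new positivity-of-initial-energy step outlined in the introduction; given that theorem, the corollary is essentially book-keeping: $b = 0$ trivializes the eigenvalue threshold \eqref{Gconl}, and the inequality $\delta < \min\{a,1\}$ is precisely what is needed to place an admissible exponent $\mu$ in $(\delta,\, 2a - \delta) \cap (0,\,1]$, so that the theorem's growth conclusion immediately rules out $L^2$ eigenfunctions.
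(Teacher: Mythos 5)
Your proposal is correct and follows exactly the route the paper intends: the paper derives Corollary \ref{Maincor1} by setting $b=0$ in Theorem \ref{Mainthm1} (so \eqref{Gconl} becomes $\lambda>0$) and invoking the footnote that $u\in L^2$ forces $\nabla u\in L^2$, which via the coarea formula contradicts the growth of $r^{\mu}[M(r)^2+N(r)^2]$ for an exponent $\mu\in(\delta,\min\{1,2a-\delta\})$. Your bookkeeping with the choice of $\mu$ and the divergence of $\int_R^\infty r^{-\mu}\,dr$ is exactly the detail the paper leaves implicit, so nothing further is needed.
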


%\begin{eqnarray}
% \nonumber to remove numbering (before each equation)
  %\lim_{r\to \infty} r^{\mu} [M(r)^2+N(r)^2] &=& \infty \\
%\lim_{r\to \infty} R^{\mu} \int_{R}^{R+b}M(r)^2dr &=& \infty
%\end{eqnarray}
%for any $b>0$. %In particular, there are no  eigenvalues larger than $\frac{A}{2}+\frac{B^2}{2}$.

\begin{corollary}\label{Cor1}
Suppose
\begin{align*}%\label{Gequ1thm1}
   \frac{a}{r}\hat{g}
   \leq \nabla dr \leq
   \frac{b}{r} \hat{ g }
\end{align*}
for large $r$,
where $a >0$ and $b>0$ are constants satisfying
\begin{align}\label{Gequ1thm21}
   a \le b \quad {\rm and} \quad \frac{n+1}{n-1}a > b . %\tag{$*_2$}
\end{align}
Let $\lambda > 0$  and $u$ be a nontrivial solution to the eigen-equation
\begin{align*}
     \Delta u + \lambda u = 0 .
\end{align*}
Then  for any
\begin{align*}%\label{Gequ1thm3}
   \mu > \frac{n-1}{2} ( b - a ), %\tag{$*_4$}
\end{align*}
we have
\begin{align*}%\label{Gequ1thm4}
   \liminf_{r \to \infty}~r^{\mu}[ M(r)^2+N(r)^2] =\infty.
\end{align*}
\end{corollary}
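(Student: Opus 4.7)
The plan is to reduce Corollary \ref{Cor1} to Theorem \ref{Mainthm1} by matching constants carefully. Here $V \equiv 0$, so the potential decay hypotheses are vacuously satisfied. The lower bound $\frac{a}{r}\hat{g} \leq \nabla dr$ directly supplies (\ref{ajun}) with the theorem's constant $a_{\rm thm}$ equal to the corollary's $a$.

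To handle (\ref{Gcons})--(\ref{Gconl}), I compute $\Delta r$. Because $|\nabla r|\equiv 1$ on the end, $(\nabla dr)(\nabla r,\cdot)=\frac{1}{2}d(|\nabla r|^2)=0$, so $\nabla dr$ annihilates the radial direction and $\Delta r = \operatorname{tr}_{\hat{g}}(\nabla dr)$ is a trace over the $(n-1)$-dimensional angular distribution. Tracing the pinching $\frac{a}{r}\hat{g} \leq \nabla dr \leq \frac{b}{r}\hat{g}$ yields
\begin{equation*}
\left| \Delta r - \frac{(n-1)(a+b)}{2r} \right| \leq \frac{(n-1)(b-a)}{2r},
\end{equation*}
so in the theorem I set $b_{\rm thm} = 0$, $c_{\rm thm} = \frac{(n-1)(a+b)}{2}$, and $\delta = \frac{(n-1)(b-a)}{2}$. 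With $b_{\rm thm} = 0$, condition (\ref{Gconl}) collapses to $\lambda > 0$, which holds by hypothesis.

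It then remains to verify (\ref{Gcons}): $\mu_{\rm thm} > \delta$ and $2a > \mu_{\rm thm} + \delta$, i.e., $\mu_{\rm thm}$ must lie in the open interval
\begin{equation*}
I := \left( \tfrac{n-1}{2}(b-a),\; 2a - \tfrac{n-1}{2}(b-a) \right).
\end{equation*}
This interval is nonempty precisely when $2a > (n-1)(b-a)$, i.e.\ $\frac{n+1}{n-1}a > b$, which is exactly the sharp condition (\ref{Gequ1thm21}). Given any $\mu > \frac{n-1}{2}(b-a)$, I choose $\mu_0 \in I$ with $\mu_0 \leq \mu$ (taking $\mu_0 = \mu$ if $\mu \in I$, otherwise any element of $I$). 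Theorem \ref{Mainthm1} applied with $\mu_{\rm thm}=\mu_0$ yields $\liminf_{r\to\infty} r^{\mu_0}[M(r)^2 + N(r)^2] = \infty$, and since $r^{\mu-\mu_0} \geq 1$ for large $r$, the conclusion with exponent $\mu$ follows. The only nontrivial step is the trace identity for $\Delta r$; the rest is bookkeeping to confirm that the exponent $\frac{n-1}{2}(b-a)$ in the corollary emerges precisely as the lower endpoint of $I$, and that the sharp hypothesis $\frac{n+1}{n-1}a>b$ is exactly what makes $I$ nonempty.
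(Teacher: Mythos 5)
Your proposal is correct and takes essentially the same route as the paper's proof: trace the Hessian pinching to get $\frac{(n-1)a}{r}\leq\Delta r\leq\frac{(n-1)b}{r}$, apply Theorem \ref{Mainthm1} with $b_{\rm thm}=0$, $c_{\rm thm}=\frac{(n-1)(a+b)}{2}$, $\delta=\frac{(n-1)(b-a)}{2}$, and pick an admissible exponent in $\bigl(\delta,\,2a-\delta\bigr)$ not exceeding $\mu$, the nonemptiness of this interval being exactly \eqref{Gequ1thm21}. The only differences are cosmetic: you write the correct center $\frac{(n-1)(a+b)}{2r}$ (the paper's displayed center $\frac{(n-1)(b-a)}{2r}$ is a typo, while its $\delta$ agrees with yours) and you spell out the trace identity and the collapse of \eqref{Gconl} to $\lambda>0$, which the paper leaves implicit.
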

\begin{proof}
By the fact that $\Delta r$ is the trace of $\nabla dr$,
one has
\begin{equation*}
  \frac{(n-1)a}{r} \leq \Delta r\leq \frac{(n-1)b}{r}.
\end{equation*}
So
\begin{equation*}
  |\Delta r-\frac{(b-a)(n-1)}{2r}|\leq \frac{\delta}{r},
\end{equation*}
where $\delta=\frac{(b-a)(n-1)}{2}$.
Let $s$ satisfy   $\mu>s>\delta$ and  be close to $\delta$ so that
\begin{equation*}
    2a>\delta+s
\end{equation*}
  by \eqref{Gequ1thm21}.
By Theorem \ref{Mainthm1}, one has
\begin{equation*}
    \liminf_{r\to \infty} r^s[M(r)^2+N(r)^2]=\infty,
\end{equation*}
which implies
 \begin{equation*}
    \liminf_{r\to \infty} r^{\mu}[M(r)^2+N(r)^2]=\infty.
\end{equation*}
\end{proof}

Letting $\mu =1$ in Corollary \ref{Cor1}, we can get the following result.
\begin{corollary}\label{Cor2}
Suppose
\begin{align*}%\label{Gequ1thm1}
   \frac{a}{r} \hat{g}
   \leq \nabla dr \leq
   \frac{b}{r} \hat{g}
\end{align*}
for large $r$,
where $a >0$ and $b>0$ are constants satisfying
\begin{align*}%\label{Gequ1thm2}
   a \leq b \quad {\rm and} \quad \frac{n+1}{n-1}a > b .
\end{align*}
Suppose
\begin{equation*}
  \frac{n-1}{2} ( b - a )<1.
\end{equation*}
Then $\sigma(-\Delta)=\sigma_{\rm ess }(-\Delta)=[0,\infty)$ and $\sigma_{\rm p }(-\Delta)=\emptyset$.
\end{corollary}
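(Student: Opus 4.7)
The plan is to handle the three pieces separately: (i) the absence of positive eigenvalues, (ii) the absence of zero as an eigenvalue, and (iii) the identification of the essential spectrum with $[0,\infty)$.

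For (i), I would simply invoke Corollary \ref{Cor1} with $\mu=1$: the hypothesis $\frac{n-1}{2}(b-a)<1$ is exactly what is needed to guarantee that $\mu=1$ lies in the admissible range $\mu>\frac{n-1}{2}(b-a)$. Thus any nontrivial solution $u$ of $\Delta u+\lambda u=0$ with $\lambda>0$ satisfies
\begin{equation*}
\liminf_{r\to\infty}\,r\bigl[M(r;u)^2+N(r;u)^2\bigr]=\infty.
\end{equation*}
In particular, for every $C>0$ one has $M(r;u)^2\ge C/r$ on a sequence $r_k\to\infty$ (in fact eventually). Integrating in $r$ on the end $M\setminus\overline{U}$ in the coordinates induced by the diffeomorphism $\exp_{\partial U}^{\perp}$, this forces $\int^\infty M(r;u)^2\,dr=\infty$, which contradicts $u\in L^2(M)$. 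Hence no positive $L^2$-eigenfunctions exist.

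For (ii), suppose $-\Delta u=0$ with $u\in L^2(M)$. Since $-\Delta$ is essentially self-adjoint on $C_0^\infty(M)$, standard cutoff/Green-identity arguments give $\int_M |\nabla u|^2\,dx=\langle -\Delta u,u\rangle=0$, so $u$ is constant. The curvature bound $\nabla dr\ge (a/r)\hat g$ with $a>0$ forces the area of the spheres $\{r=r_0\}$ to grow like a positive power of $r_0$, whence $M$ has infinite volume, so a constant $L^2$ function must vanish. Therefore $0\notin\sigma_p(-\Delta)$.

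For (iii), note first that $-\Delta\ge 0$ gives $\sigma(-\Delta)\subset[0,\infty)$, and combined with (i)–(ii) this yields $\sigma_p(-\Delta)=\emptyset$. It remains to show $[0,\infty)\subset\sigma_{\rm ess}(-\Delta)$. The plan is to construct Weyl sequences at each $\lambda\ge 0$: pick a point $p_k\to\infty$ on the end and, using the asymptotic flatness encoded by $\nabla dr=O(1/r)\hat g$, transplant a standard Euclidean approximate eigenfunction $\varphi_k(x)=\chi_k(x)e^{i\sqrt{\lambda}\langle\xi,x\rangle}$ (with $|\xi|=1$ and $\chi_k$ a bump with spatial support shrinking to zero in geodesic normal coordinates centered at $p_k$, but with $L^2$-mass one). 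The curvature assumption ensures that in such coordinates the metric coefficients converge to $\delta_{ij}$ uniformly on the supports, so $(-\Delta-\lambda)\varphi_k\to 0$ in $L^2$, while $\varphi_k\rightharpoonup 0$ by the escape $p_k\to\infty$. This shows $\lambda\in\sigma_{\rm ess}(-\Delta)$.

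The main obstacle I anticipate is (iii): one needs to be a bit careful that under the one-sided Hessian bounds of Corollary \ref{Cor2} the end really is asymptotically Euclidean in a quantitative enough way to make the Weyl-sequence error $\|(-\Delta-\lambda)\varphi_k\|_{L^2}$ vanish. Parts (i) and (ii) are nearly immediate from Corollary \ref{Cor1} and the self-adjointness of $-\Delta$ respectively.
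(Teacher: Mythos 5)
Your parts (i) and (ii) are essentially the paper's route, but both contain fixable slips and part (iii) as proposed has a genuine gap. In (i), Corollary \ref{Cor1} with $\mu=1$ gives $\liminf_{r\to\infty} r\,[M(r)^2+N(r)^2]=\infty$, i.e.\ a lower bound on the \emph{sum} $M^2+N^2$, not on $M^2$ alone; you cannot conclude $M(r)^2\geq C/r$ eventually, so the contradiction with $u\in L^2$ alone does not close. The paper's way out is its footnote to Theorem \ref{Mainthm1}: by essential self-adjointness, an $L^2$ eigenfunction automatically has $\nabla u\in L^2(M)$, hence (by the coarea formula, $|\nabla r|=1$) $\int^\infty\bigl[M(r)^2+N(r)^2\bigr]dr<\infty$, which is what the growth estimate contradicts. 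Part (ii) (constant harmonic $L^2$ function plus infinite volume, which follows from $\Delta r\geq (n-1)a/r$) is fine and is the standard supplement the paper leaves implicit.

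The real issue is (iii). Your Weyl sequence built from transplanted Euclidean plane waves $\chi_k e^{i\sqrt{\lambda}\langle\xi,x\rangle}$ in normal coordinates at escaping points is not justified by the hypotheses: the assumption $\frac{a}{r}\hat g\leq\nabla dr\leq\frac{b}{r}\hat g$ controls only the radial Hessian, not the full curvature tensor or the injectivity radius, and the end need not be asymptotically Euclidean in any pointwise sense (the area of $S_r$ can grow like $r^{(n-1)a}$ with $a$ arbitrary, e.g.\ $a=b=2$), so there is no reason the metric coefficients converge to $\delta_{ij}$ on the supports and no reason $\|(-\Delta-\lambda)\varphi_k\|_{L^2}\to0$ — exactly the obstacle you flagged. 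The paper instead identifies the essential spectrum by quoting the known result (Remark \ref{reku}, due to Donnelly--Li and Kumura) that $\sigma_{\rm ess}(-\Delta)=[\tfrac{c^2}{4},\infty)$ whenever $\Delta r\to c$; here $\frac{(n-1)a}{r}\leq\Delta r\leq\frac{(n-1)b}{r}$ gives $c=0$, hence $\sigma_{\rm ess}(-\Delta)=[0,\infty)$, and non-negativity of $-\Delta$ gives $\sigma(-\Delta)\subset[0,\infty)$, so $\sigma(-\Delta)=\sigma_{\rm ess}(-\Delta)=[0,\infty)$; combined with (i)--(ii), $\sigma_p(-\Delta)=\emptyset$. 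If you want a self-contained Weyl construction, the correct quasimodes are \emph{radial}, $\varphi_k=\chi_k(r)e^{i\sqrt{\lambda}\,r}$ supported in annuli going to infinity (suitably normalized), for which the only error terms involve $\Delta r$, and these vanish precisely because $\Delta r\to0$ — only the hypothesis you actually have.
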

Similarly, we have
\begin{corollary}\label{Corjun}
Let $\kappa>0$.
Suppose
\begin{align*}%\label{Gequ1thm1}
  (\kappa+ \frac{a}{r}) \hat{g}
   \leq \nabla dr \leq
   (\kappa+\frac{b}{r} )\hat{g}
\end{align*}
for large $r$,
where $a >0$ and $b>0$ are constants satisfying
\begin{align*}%\label{Gequ1thm2}
   a \leq b \quad {\rm and} \quad (n-1)(b-a)<2 .
\end{align*}
Let $$E_0=\frac{ \kappa^2(n-1)^2}{4}+\frac{(n-1)^4\kappa^2(b-a)^2}{4-(n-1)^2(b-a)^2}.$$
Then  $-\Delta$ does not have eigenvalue larger than $ E_0
    $.
\end{corollary}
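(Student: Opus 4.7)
The plan is to derive Corollary \ref{Corjun} as a direct instance of Theorem \ref{Mainthm1} with $V\equiv 0$ and the particular choice $\mu=1$, using that exponent to contradict the $L^2$-membership of any putative eigenfunction with eigenvalue $\lambda>E_0$.

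First, I would translate the geometric sandwich hypothesis on $\nabla dr$ into the two scalar hypotheses of Theorem \ref{Mainthm1}. Taking traces against $\hat g$ (which has trace $n-1$, since $|\nabla r|^2=1$) yields
\begin{equation*}
   (n-1)\kappa + \frac{(n-1)a}{r} \;\leq\; \Delta r \;\leq\; (n-1)\kappa + \frac{(n-1)b}{r},
\end{equation*}
so the asymptotic of $\Delta r$ required by Theorem \ref{Mainthm1} holds with $b_{\mathrm{thm}} := (n-1)\kappa$, $c := (n-1)(a+b)/2$ and $\delta := (n-1)(b-a)/2$. Because $\nabla dr\geq\kappa\,\hat g$ to leading order, the tensor $r\,\nabla dr - a_{\mathrm{thm}}\hat g$ diverges in the positive-semidefinite order for any fixed $a_{\mathrm{thm}}>0$, so the parameter $a_{\mathrm{thm}}$ of the master theorem may be chosen as large as necessary.

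Next, I would take $\mu=1$. The compatibility conditions \eqref{Gcons} then reduce to $(n-1)(b-a)<2$ (the standing hypothesis) and $2a_{\mathrm{thm}}>1+\delta$ (arranged by the previous step), while
\begin{equation*}
   \mu^{2}-\delta^{2} \;=\; 1-\frac{(n-1)^{2}(b-a)^{2}}{4} \;=\; \frac{4-(n-1)^{2}(b-a)^{2}}{4}
\end{equation*}
converts \eqref{Gconl} into exactly $\lambda>E_0$ with $E_0$ as stated.

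Finally, I would argue by contradiction: if $u\in L^2(M)$ satisfies $-\Delta u=\lambda u$ with $\lambda>E_0$, essential self-adjointness of $-\Delta$ forces $\nabla u\in L^2$ (cf.\ the footnote to Corollary \ref{Maincor1}), hence $\int_0^\infty[M(r)^2+N(r)^2]\,dr<\infty$; but Theorem \ref{Mainthm1} gives $\liminf_{r\to\infty} r[M(r)^2+N(r)^2]=\infty$, so $M(r)^2+N(r)^2>r^{-1}$ eventually and the radial integral diverges. I do not anticipate a genuine obstacle: the content is essentially a dictionary between the asymptotically hyperbolic geometry $\nabla dr\sim\kappa\hat g$ and the parameters of the master theorem, the one substantive observation being that $\mu=1$ is the critical exponent at which $L^2$-integrability is contradicted — this is precisely why the strict inequality $(n-1)(b-a)<2$, rather than some weaker bound, is the right hypothesis.
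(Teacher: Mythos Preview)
Your proposal is correct and is exactly the approach the paper intends: the paper states Corollary~\ref{Corjun} with the single word ``Similarly,'' meaning by the same reduction to Theorem~\ref{Mainthm1} used for Corollaries~\ref{Cor1}--\ref{Cor2} (trace to get $\Delta r$, set $b_{\mathrm{thm}}=(n-1)\kappa$, $\delta=\tfrac{(n-1)(b-a)}{2}$, take $\mu=1$, and note that $\kappa>0$ makes the Hessian lower bound \eqref{ajun} trivial for arbitrarily large $a_{\mathrm{thm}}$). Your identification of the parameters, the resulting threshold $E_0$, and the $L^2$-contradiction via $\liminf r[M^2+N^2]=\infty$ are all as intended.
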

\begin{remark}\label{reku}
\begin{itemize}
\item
We used the fact that
$ \sigma_{\rm ess}(-\Delta)=[\frac{c^2}{4},\infty)$ if
$\lim \Delta r =c$   \cite{donnelly1979pure,kumura1997essential} and $-\Delta$ is a non-negative operator.
\item Corollaries \ref{Cor1} and \ref{Cor2} were obtained by
Kumura \cite{kumuraflat} with some extra assumptions on Ricci curvature.
\end{itemize}
\end{remark}
A result similar to Theorem   \ref{Mainthm1}  has been already obtained in \cite{LiuI,ISad} and similar   Corollaries \ref{Cor1} and \ref{Cor2}  have also been already obtained.
Here, we want to  say more about  the theorems in \cite{LiuI,ISad} and the present paper.
We will use  Corollaries \ref{Cor2} and \ref{Corjun} as an example first.
Under the same assumptions of  Corollary \ref{Corjun}, it was shown
 $-\Delta$ does not have eigenvalue larger than $
    E_2$ (\cite[Corollary 2.4]{ISad}), where
    $$E_2=\frac{ \kappa^2(n-1)^2}{2(2-(n-1)(b-a))}.$$
     Under the same assumptions of  Corollary \ref{Corjun}, it was shown
 $-\Delta$ does not have eigenvalue larger than $
    E_1$ (see Corollary 1.3 and Remark 1.4 in \cite{LiuI}), where
    $$E_1=\frac{ \kappa^2(n-1)^2}{4}+\frac{(n-1)^4\kappa^2(b-a)^2}{4(4-(n-1)^2(b-a)^2)}.$$
 Thus we have
\begin{description}
  \item [Corollary \ref{Cor2} ] For $\kappa=0$, the assumptions in the present paper are stronger than those in \cite{ISad}. Thus the result  in \cite{ISad} is best.
  \item  [Corollary \ref{Corjun} ] For $\kappa>0$, all the assumptions in  \cite[Theorem 1.5]{ISad}, \cite[Theorem 1.1]{LiuI} and  present paper are exactly the same.
  The bound $E_1$ in \cite{LiuI} is the best, that is $E_1\leq E_0$ and $E_1\leq E_2$. If $b-a$ is small (large), the bound $E_0$ in the present paper is better (worse) than that in
  \cite{ISad}.
\end{description}
%The main theorem (\cite[Theorem 1.1]{LiuI}) in \cite{LiuI} needs the assumption  that $a>2$ in \eqref{ajun}. However, the bound  similar to \eqref{Gconl} is better.
%As the result,  Corollary \ref{Cor1} can not be obtained in \cite{LiuI} and

 It is not just the corollaries in the three papers are comparable,
 the main theorems-\cite[Theorem 1.5]{ISad}, \cite[Theorem 1.1]{LiuI} and Theorem \ref{Mainthm1} are also comparable.
 Theorem 1.5 in \cite{ISad} was formulated in  a different way, which is involved into the derivatives of $\Delta r$ and $r(x)$ is not necessary to be the distance function.
 By direct modifications of our arguments, a similarly generalized version  may also be obtained.

Suppose $b>0$.
 Theorem \ref{Mainthm1} implies (letting $\mu=1$), if $\delta<1$,
 $-\Delta$ does not have eigenvalue larger than $
    E_0$, where
$$E_0=   \frac{b ^2}{4}+ \frac{ {\delta}^2 b ^2}{1-{\delta}^2}.$$
Under the assumptions of Theorem \ref{Mainthm1}, Ito and Skibsted obtained that \footnote{Let $c_1=2-2\delta$, $\rho_1=2(\delta+ r(\Delta r-b-\frac{c}{r}))$, $\rho_2=0$, $\rho_3=2+2rb+2c-2\delta$ and $c_2>0$ be  arbitrarily  small in Theorem 1.5 in \cite{ISad}. Such setting is optimal under the assumptions of Theorem \ref{Mainthm1}.}
if $\delta<1$,
 $-\Delta$ does not have eigenvalue larger than $
    E_2$
$$E_2=   \frac{b ^2}{4(1-\delta)}.$$
Under the assumptions of Theorem \ref{Mainthm1} with $a\geq 2$, the bound obtained in \cite{LiuI} is best among the three papers. More precisely, $-\Delta$ does not have eigenvalue larger than $
    E_1$
$$E_1=   \frac{b ^2}{4}+ \frac{ {\delta}^2 b ^2}{4(1-{\delta}^2)}$$
for $\delta<1$.

Without the extra assumption $a\geq 2$, whether  the bound in \cite{ISad} or in the present paper is better depends on the size of the  perturbation $\delta$ ($\frac{1}{3}$ is the threshold).

Letting  $b=0$ in Theorem \ref{Mainthm1}, we have
\begin{corollary}\label{Cor3}
Suppose
\begin{equation*}
     \nabla dr=(\frac{a}{r}+\frac{o(1)}{r})\hat{g},
\end{equation*}
for some $a>0$.
Let the potential  $V(r)=V_1(r)+V_2(r)$. Suppose
\begin{equation*}
    |V_1(r)|=\frac{o(1)}{r}, |V_2(r)|=o(1), |\frac{\partial V_2}{\partial r}|=\frac{o(1)}{r}.
\end{equation*}
Then   operator $-\Delta+V$ does not have positive eigenvalue.
\end{corollary}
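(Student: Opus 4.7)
The plan is to deduce Corollary~\ref{Cor3} from Theorem~\ref{Mainthm1} by specializing to $b=0$. Suppose for contradiction that $\lambda>0$ is an eigenvalue with nontrivial eigensolution $u\in L^2(M)$; the task is then to verify the hypotheses of Theorem~\ref{Mainthm1} for a choice of $\mu$ that makes its conclusion incompatible with $u\in L^2(M)$.

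First I would translate the pointwise asymptotic $\nabla dr=(a/r+o(1)/r)\hat g$ into the two geometric conditions demanded by Theorem~\ref{Mainthm1}. Taking the trace of $\nabla dr$ along the tangent directions of a geodesic sphere gives $\Delta r=(n-1)a/r+o(1)/r$, so with $b=0$ and $c=(n-1)a$ one has $\limsup_{r\to\infty} r|\Delta r-b-c/r|=0$, which certainly produces the bound $\le\delta$ for any $\delta>0$. Likewise, $r\nabla dr=(a+o(1))\hat g$ gives $\liminf_{r\to\infty}[\,r\nabla dr-a\hat g\,]\ge 0$, so the parameter in \eqref{ajun} may be taken to be the same $a$. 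The potential hypotheses in Corollary~\ref{Cor3} are verbatim those of the theorem.

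Next I would fix the free parameters. Since $a>0$ may be small, I take $\mu:=\min(1,a)/2$ and then choose $\delta>0$ so small that $\delta<\mu$ and $\mu+\delta<2a$; both strict inequalities are compatible because $\mu<a$ by construction. Then \eqref{Gcons} is verified, while \eqref{Gconl} reduces to $\lambda>0$, which is our standing assumption. Theorem~\ref{Mainthm1} therefore yields
\begin{equation*}
   \liminf_{r\to\infty} r^{\mu}[M(r)^2+N(r)^2]=\infty.
\end{equation*}

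To close the argument I would use essential selfadjointness of $-\Delta+V$ (as recorded in the footnote attached to Corollary~\ref{Maincor1}) to upgrade $u\in L^2(M)$ to $\nabla u\in L^2(M)$. The coarea formula on the end $M\backslash\overline U$, together with $|\nabla r|=1$, then gives $\int^{\infty}(M(r)^2+N(r)^2)\,dr<\infty$. But the displayed liminf forces $M(r)^2+N(r)^2\ge C r^{-\mu}$ for all large $r$, and since $\mu\le 1$ this lower bound is not integrable on $[1,\infty)$; the resulting contradiction forces $u\equiv 0$. The only point requiring care is the insistence $\mu\le 1$ when $a$ is small, since otherwise the conclusion of Theorem~\ref{Mainthm1} would not exclude square-integrability; the choice $\mu=\min(1,a)/2$ handles this uniformly in $a$.
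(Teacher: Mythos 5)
Your proposal is correct and follows essentially the same route as the paper, which obtains Corollary~\ref{Cor3} precisely by setting $b=0$ (so that \eqref{Gconl} reduces to $\lambda>0$) in Theorem~\ref{Mainthm1} and using, as in the footnote to Corollary~\ref{Maincor1}, that essential selfadjointness gives $\nabla u\in L^2(M)$ so that the growth conclusion with $\mu\le 1$ contradicts square-integrability. Your explicit verification of \eqref{ajun}, the trace computation $\Delta r=(n-1)a/r+o(1)/r$, and the choice $\mu=\min(1,a)/2$ with $\delta$ small are exactly the routine details the paper leaves implicit.
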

\begin{remark}
 Corollary \ref{Cor3} works for in particular  Euclidean space ($\nabla dr=\frac{1}{r}\hat{g}$),  covering a well known result.
\end{remark}

\begin{corollary}\label{Cor4}
Let $M^n$ be a complete Riemannian manifold with a pole. Suppose
the radial curvature satisfies $K_{\rm rad}(r)\leq \frac{\delta_n}{1+r^2}$, for  sufficiently small $\delta_n$. Then M
admits no eigenvalue.
\end{corollary}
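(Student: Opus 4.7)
The plan is to reduce Corollary \ref{Cor4} to Corollary \ref{Cor2} by using Hessian comparison on a manifold with a pole. I interpret the hypothesis as the two-sided bound $|K_{\rm rad}(r)|\le \delta_n/(1+r^2)$ (the form used in the introduction); without some lower bound on curvature one cannot control $\nabla dr$ from above. Applying the standard Hessian comparison theorem in both directions to the solutions $\phi_\pm$ of the Jacobi problems
\[
\phi_\pm''(r)\pm\frac{\delta_n}{1+r^2}\phi_\pm(r)=0,\qquad \phi_\pm(0)=0,\ \phi_\pm'(0)=1,
\]
yields
\[
\frac{\phi_+'(r)}{\phi_+(r)}\,\hat{g}\ \le\ \nabla dr\ \le\ \frac{\phi_-'(r)}{\phi_-(r)}\,\hat{g}
\]
on $M\setminus\{o\}$, where $o$ denotes the pole.

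The next step is to read off the asymptotics of $\phi_\pm'/\phi_\pm$. Near infinity both ODEs are regular perturbations of the Euler equation $u''\pm\delta_n r^{-2}u=0$, whose characteristic equation $\mu(\mu-1)\pm\delta_n=0$ has dominant positive root $\mu_+=\tfrac{1}{2}(1+\sqrt{1-4\delta_n})$ (for the $+$ sign) and $\mu_-=\tfrac{1}{2}(1+\sqrt{1+4\delta_n})$ (for the $-$ sign). Substituting $\phi_\pm(r)=r^{\mu_\pm}h_\pm(r)$ produces an ODE for $h_\pm$ in which the perturbative coefficient decays like $r^{-4}$, and a standard Levinson-type (or contraction-mapping) argument then gives $h_\pm(r)\to c_\pm$ with $h_\pm'/h_\pm=o(r^{-1})$. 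The non-vanishing of $c_\pm$ for the solution picked out by the initial data at $r=0$ can be verified by a Wronskian argument against the subdominant Euler mode $r^{1-\mu_\pm}$. Together these give
\[
\frac{\phi_\pm'(r)}{\phi_\pm(r)}=\frac{\mu_\pm}{r}+o(r^{-1})\qquad (r\to\infty).
\]

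Finally, choose $a$ slightly below $\mu_+$ and $b$ slightly above $\mu_-$, so that $(a/r)\hat{g}\le\nabla dr\le (b/r)\hat{g}$ holds for all sufficiently large $r$. Because $\mu_+,\mu_-\to 1$ and $\mu_--\mu_+\to 0$ as $\delta_n\to 0$, the three Corollary \ref{Cor2} conditions $a\le b$, $\tfrac{n+1}{n-1}a>b$, and $\tfrac{n-1}{2}(b-a)<1$ all hold once $\delta_n$ is taken small enough in terms of $n$; Corollary \ref{Cor2} then yields $\sigma_p(-\Delta)=\emptyset$, which is the desired conclusion. The only step that is not pure bookkeeping is the asymptotic analysis of the two Jacobi ODEs, and within that the one genuinely non-automatic point is the non-vanishing of the leading coefficient $c_\pm$.
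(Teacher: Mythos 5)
Your proposal follows essentially the same route the paper indicates for Corollary \ref{Cor4}: use the comparison theorem to turn the curvature hypothesis into Hessian bounds $\frac{a}{r}\hat{g}\le \nabla dr \le \frac{b}{r}\hat{g}$ with $a,b\to 1$ as $\delta_n\to 0$, and then apply Corollary \ref{Cor2} (equivalently Theorem \ref{Mainthm1}); the only difference is that the paper cites the comparison results (Lemma 1.2 of Escobar--Freire and Propositions 2.1, 2.2 of Kumura) where you rederive the Jacobi-equation asymptotics by hand. Your two-sided reading $|K_{\rm rad}(r)|\le \delta_n/(1+r^2)$ is consistent with the paper's introduction and is exactly what the comparison step requires.
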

\begin{remark}
\begin{itemize}
\item
Corollary \ref{Cor4} was proved by Donnelly and Garofalo \cite{donnelly1992}. A weaker version (curvature condition $0\leq K_{\rm rad}(r)\leq \frac{\delta_n}{1+r^2}$)
was  proved by Escobar-Freire \cite{escobar1992spectrum}.
Corollary \ref{Cor4} follows from Theorem \ref{Mainthm1} and comparison theorem. We gave up the details here.
See Lemma 1.2 \cite{escobar1992spectrum}, and Propositions 2.1, 2.2 in \cite{kumuraflat} for comparison theorem.
\item
By checking our proof,
Corollary \ref{Cor4} also holds if we  replace the  radial curvature  with  Ricci curvature of the direction $\nabla r$.
\end{itemize}
\end{remark}
Now the only task is  to prove Theorem
\ref{Mainthm1}.
The rest of the paper is organized as follows:
In \S 2, we present some basic knowledge.
In \S 3, we will give  the  construction of energy functions and derive their derivatives.
In \S 4, we prove the exponential decay of the eigensolution.
In \S 5, we establish the  positivity of the initial energy corresponding to the energy function  and  give the proof    of Theorem \ref{Mainthm1}.

\section{Preliminaries and derivative lemma}
We recall some  notations in \cite{LiuI} first.
Let   $S_t=\{x\in M: r(x)=t\}$,  $ \omega\in  S_r$ and $x\in M$. Thus   $(r,\omega)$ is a local coordinate system for $M$ \footnote{$\omega$ depends on $r$. We ignore the dependence for simplicity.}.
Let $\langle\cdot,\cdot\rangle$ be  the metric on Riemannian manifold. %For simplicity, denote $dx=dg$. We also assume $u$ is nonzero.
%and
%\begin{equation*}
 %   B(s,t)=\{x\in M: s\leq r(x)\leq t\}.
%\end{equation*}

%Let   function $\rho(r)$ be such that $2\rho^{\prime}=b+\frac{c^{\prime}}{r}$.
%We will adjust $c^{\prime}$ for different purpose.

 Let  $\hat{L}=e^{\rho}L e^{-\rho}$, where $L=-\Delta+V$.
 We draw the graph to describe the relation.
 \begin{align*}
  \begin{CD}
    L^2(M,dg)           @>{ -\Delta + V}>>       L^2(M,dg)          \\
    @V{e^{\rho}}VV                         @VV{e^{\rho}}V  \\
    L^2(M,e^{-2\rho}dg)   @>>{ \hat{ L} }>           L^2(M,e^{-2\rho}dg)
  \end{CD}
\end{align*}
Let
\begin{equation*}
    v=e^{\rho}u.
\end{equation*}
Then, one has
\begin{equation*}
    \nabla u=-\rho^{\prime}e^{-\rho}v \nabla r+e^{-\rho} \nabla v,
\end{equation*}
and
\begin{equation*}
    \Delta u={\rm div} \nabla u=e^{-\rho}\Delta v -2\rho^{\prime}e^{-\rho}\frac{\partial v}{\partial r}+(\rho^{\prime 2}-\rho^{\prime \prime}-\rho^{\prime} \Delta r)e^{-\rho}v.
\end{equation*}
So  the eigen-equation becomes
\begin{equation}\label{equav}
    -\Delta v+2\rho^{\prime}\frac{\partial v}{\partial r}+(V_1+V_2+V_0)v=\lambda v,
\end{equation}
where
\begin{equation*}%\label{Gv0}
V_0=\rho^{\prime} \Delta r+\rho^{\prime \prime}-\rho^{\prime 2}.
\end{equation*}

Let $2\rho^{\prime}=b+\frac{c}{r}$ and  $r(\Delta r-b-\frac{c}{r})=\bar{\delta}(r)$.
%By the assumption of Theorem \ref{Mainthm1}, $\delta_1$ is bounded.
 Then
\begin{equation*}%\label{Def.V0}
V_0=\frac{b^2}{4}+\frac{bc}{2r}+\frac{b\bar{\delta}}{2r}+\frac{O(1)}{r^2}.
\end{equation*}

We should mention that $O(1)$ and $o(1)$  in the proof only depend  on constants in the  assumptions of  Theorem \ref{Mainthm1}.

Now we always assume $u$ is a nontrivial solution of $-\Delta u+Vu=\lambda u$, where $V=V_1+V_2$.
Let
\begin{equation*}
   v=e^{\rho}u,
\end{equation*}
and
\begin{equation*}
    v_m=r^m v
\end{equation*}
with $m\geq 0$.
By \eqref{equav}, we get the equation of $v_m$,
\begin{equation}\label{equav1new}
    \Delta v_m -(\frac{2m}{r}+2\rho^{\prime} )\frac{\partial v_m}{\partial r}+(\frac{m(m+1)}{r^2}+\frac{m}{r}(2\rho^{\prime}-\Delta r)-V_0-V_1-V_2+\lambda)v_m=0.
\end{equation}
\begin{lemma}\label{Keylemma1}\cite{kumura2010radial,LiuI}
Let $X$ be a vector field. Then
\begin{equation}\label{Gder1}
    \frac{\partial}{\partial r}\int_{S_r}\langle X,\nabla r\rangle e^{-2\rho}dx=\int_{S_r}({\rm div} X -2\rho^{\prime}\langle X,\nabla r\rangle) e^{-2\rho}dx.
\end{equation}

\end{lemma}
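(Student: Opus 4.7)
The plan is to derive \eqref{Gder1} from the divergence theorem applied to a weighted vector field, combined with the coarea formula on the end $M \setminus \overline{U}$. The key geometric fact is that on this end the distance function $r$ is smooth with $|\nabla r| = 1$, and the level sets $S_r$ are smooth closed hypersurfaces whose outward unit normal (with respect to the region $\{r(x) \le r\}$) is exactly $\nabla r$; all of this is guaranteed by the assumption that $\exp_{\partial U}^\perp$ is a diffeomorphism.

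First I would fix $r_1 < r_2$ within the range of validity and apply the divergence theorem to the weighted field $Y := e^{-2\rho} X$ on the annular region $\Omega_{r_1,r_2} = \{x : r_1 \le r(x) \le r_2\}$. The boundary contribution is
\begin{equation*}
\int_{\Omega_{r_1,r_2}} \operatorname{div} Y \, dg = \int_{S_{r_2}}\langle X,\nabla r\rangle e^{-2\rho}\, dx - \int_{S_{r_1}}\langle X,\nabla r\rangle e^{-2\rho}\, dx .
\end{equation*}
Since $\rho$ depends only on $r$, we have $\nabla e^{-2\rho} = -2\rho^\prime e^{-2\rho}\nabla r$, so by the Leibniz rule
\begin{equation*}
\operatorname{div} Y = e^{-2\rho}\bigl(\operatorname{div} X - 2\rho^\prime \langle X,\nabla r\rangle\bigr).
\end{equation*}

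Next I would use the coarea formula, which, because $|\nabla r|=1$, reduces to $\int_{\Omega_{r_1,r_2}} f\, dg = \int_{r_1}^{r_2}\int_{S_r} f\, dx\, dr$ for any integrable $f$. Applying this to $f = \operatorname{div} Y$ and combining with the previous display gives
\begin{equation*}
\int_{S_{r_2}}\langle X,\nabla r\rangle e^{-2\rho} dx - \int_{S_{r_1}}\langle X,\nabla r\rangle e^{-2\rho} dx = \int_{r_1}^{r_2}\!\!\int_{S_r}\bigl(\operatorname{div} X - 2\rho^\prime \langle X,\nabla r\rangle\bigr)e^{-2\rho}\, dx\, dr.
\end{equation*}
Differentiating in $r_2$ (the inner integral is continuous in $r$ by smoothness of the coordinates) and relabeling yields \eqref{Gder1}.

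There is no real obstacle here; the only point requiring care is verifying that the induced measure on $S_r$ used in the coarea formula agrees with the surface measure appearing in the divergence theorem, which follows from $|\nabla r|=1$. Strictly, one should also justify integrability of $Y$ on a given compact annulus, which is automatic since $X$ is smooth and the region has compact closure in the smooth coordinate chart provided by $\exp_{\partial U}^\perp$.
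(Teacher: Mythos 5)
Your proof is correct. Note that the paper itself offers no proof of this lemma; it is quoted from \cite{kumura2010radial,LiuI}, where the standard argument is a differential one: write the flux integral in the $(r,\omega)$ coordinates supplied by $\exp_{\partial U}^{\perp}$, differentiate under the integral sign using the first variation of the area element, $\frac{\partial}{\partial r}\sqrt{\det g}=(\Delta r)\sqrt{\det g}$, decompose $\divv X=\frac{\partial}{\partial r}\langle X,\nabla r\rangle+(\Delta r)\langle X,\nabla r\rangle+\divv_{S_r}X^{T}$, and observe that the tangential divergence integrates to zero over the closed hypersurface $S_r$. You instead work in integral form: divergence theorem for $e^{-2\rho}X$ on the annulus $\{r_1\le r\le r_2\}$, the Leibniz identity $\divv(e^{-2\rho}X)=e^{-2\rho}(\divv X-2\rho^{\prime}\langle X,\nabla r\rangle)$ (valid since $\rho=\rho(r)$ and $|\nabla r|=1$), the coarea formula, and then differentiation in $r_2$. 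The two routes are equivalent in content; yours trades the pointwise divergence decomposition and first-variation formula for the coarea/fundamental-theorem step, and it correctly isolates the only points needing care (agreement of the coarea slice measure with the hypersurface measure because $|\nabla r|=1$, continuity in $r$ of the inner integral, and the sign of the normal on the inner boundary). The implicit compactness of $\partial U$ (hence of $S_r$), which you use for integrability, is part of the paper's end structure, so no genuine gap remains.
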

%\begin{proof}
%\begin{equation*}
%    {\rm div} (X e^{-2\rho})=e^{-2\rho} {\rm div} (X )-2\rho^{\prime} e^{-2\rho}<X,\nabla r>
%\end{equation*}
%Integration by part, we get
%\begin{equation*}
%   \int_{S_{t_2}}<X,\nabla r> e^{-2\rho}dx- \int_{S_{t_1}}<X,\nabla r> e^{-2\rho}dx=\int_{t_1\leq|r(x)|\leq t_2}({\rm div} X -2\rho^{\prime}<X,\nabla r>) e^{-2\rho}dx.
%\end{equation*}
%Which implies \eqref{Gder1} by letting $t_1,t_2$ go to $r$.
%\end{proof}

\begin{lemma}\label{Keylemma2}\cite{kumura2010radial,LiuI}
\begin{equation*}
    \frac{\partial}{\partial r}\int_{S_r}f  e^{-2\rho}dx=\int_{S_r} [\frac{\partial f}{\partial r}+f(\Delta r-2\rho^{\prime})]e^{-2\rho}dx.
\end{equation*}
\end{lemma}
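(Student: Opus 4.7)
The plan is to deduce Lemma \ref{Keylemma2} directly from Lemma \ref{Keylemma1} by specializing the vector field $X$ to a radial one. I would set $X = f\,\nabla r$ and substitute into \eqref{Gder1}; the whole proof then reduces to two short geometric identities.

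First, because $r$ is the distance function from $\partial U$ on the end $M\setminus\overline{U}$, the eikonal identity $|\nabla r|=1$ holds. Hence
\[
\langle X,\nabla r\rangle=f\,|\nabla r|^{2}=f,
\]
so the left-hand side of \eqref{Gder1} becomes $\partial_r\int_{S_r} f\,e^{-2\rho}\,dx$, exactly the left-hand side of Lemma \ref{Keylemma2}. Second, the Leibniz rule for the divergence of a product gives
\[
\operatorname{div}(f\nabla r)=\langle\nabla f,\nabla r\rangle+f\,\operatorname{div}(\nabla r)=\tfrac{\partial f}{\partial r}+f\,\Delta r,
\]
using the definition $\Delta r=\operatorname{div}(\nabla r)$ and the fact that the directional derivative of $f$ along $\nabla r$ coincides with $\partial f/\partial r$ in the $(r,\omega)$ coordinate system.

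Plugging these two identities into the right-hand side of \eqref{Gder1} yields
\[
\frac{\partial}{\partial r}\int_{S_r} f\,e^{-2\rho}\,dx=\int_{S_r}\Bigl(\tfrac{\partial f}{\partial r}+f\,\Delta r-2\rho'f\Bigr)e^{-2\rho}\,dx,
\]
which after grouping the last two terms as $f(\Delta r-2\rho')$ is precisely the claimed formula. There is essentially no obstacle: regularity of $f$ (smoothness on a neighborhood of $S_r$) is the only thing to be checked, and in the subsequent applications $f$ will be a polynomial expression in $v_m$ and its radial derivative, to which the divergence theorem applies on the end. The content of the lemma is thus the elementary observation that, up to the exponential weight $e^{-2\rho}$, differentiating an integral over level sets $S_r$ of the distance function is exactly Lemma \ref{Keylemma1} applied to a radial vector field.
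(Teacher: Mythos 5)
Your proof is correct. Note that the paper itself gives no proof of Lemma \ref{Keylemma2} (it is quoted from \cite{kumura2010radial,LiuI}), so there is nothing to conflict with: taking $X=f\nabla r$ in Lemma \ref{Keylemma1}, using $|\nabla r|=1$ so that $\langle X,\nabla r\rangle=f$, and $\operatorname{div}(f\nabla r)=\frac{\partial f}{\partial r}+f\Delta r$, is exactly the standard way the scalar identity is deduced from the vector-field identity \eqref{Gder1}, and your bookkeeping of the $-2\rho'f$ term is right. The only point worth stating explicitly is that $\partial f/\partial r$ means the derivative of $f$ along $\nabla r$ in the $(r,\omega)$ coordinates on the end, and that $f$ is assumed $C^1$ there, which holds for all the integrands to which the lemma is later applied.
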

%\begin{proof}
%Let $X= f \nabla r$.  By direct computation, one has
%\begin{equation*}
%    {\rm div} X= {\rm div}(f \nabla r)= \frac{\partial f}{\partial r}  +f\Delta r.
%\end{equation*}
%Putting  $X$ into   Lemma \ref{Keylemma1}, we finish the proof.
%\end{proof}

\begin{lemma}\label{Keylemma3}
 Let $q_0=\lambda-V_0-V_1-V_2+\frac{m(m+1)}{r^2}+\frac{m}{r}(2\rho^{\prime}-\Delta r)$.
 Suppose
 \begin{equation}\label{Gequ834}
  \liminf_{r\to\infty}  \int_{S_{r}}r^{1-2m}|\frac{\partial v_m}{\partial r}v_m|e^{-2\rho}dx=0.
\end{equation}
 Then
 the following holds
 \begin{eqnarray*}
 % \nonumber to remove numbering (before each equation)
   \int_{r >t} r^{1-2m}[|\nabla v_m|^2-q_0v_m^2]e^{-2\rho} dx&=& -\frac{1}{2}\frac{d}{dt}(t^{1-2m}\int_{S_t}v_m^2e^{-2\rho}dx)-\frac{1}{2}\int_{S_t}r^{-2m}(2m-1)v_m^2e^{-2\rho}dx \\
    &&+\frac{1}{2}\int_{S_t}t^{1-2m} (\Delta r-2\rho^{\prime})v_m^2e^{-2\rho} dx-\int_{r>t}r^{-2m}\frac{\partial v_m}{\partial r}v_me^{-2\rho}dx.
 \end{eqnarray*}
 \end{lemma}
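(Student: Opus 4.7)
The plan is to start from the PDE satisfied by $v_m$, namely equation \eqref{equav1new}, rewrite it as $\Delta v_m = (\frac{2m}{r}+2\rho')\frac{\partial v_m}{\partial r} - q_0 v_m$, and then apply Lemma \ref{Keylemma1} to a suitably chosen vector field to produce the desired energy identity.

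More concretely, I would set $X = r^{1-2m} v_m \nabla v_m$ so that $\langle X,\nabla r\rangle = r^{1-2m} v_m\,\partial_r v_m$. A direct computation using the product rule and $\nabla r^{1-2m}=(1-2m)r^{-2m}\nabla r$ gives
\begin{equation*}
\operatorname{div} X = r^{1-2m}|\nabla v_m|^2 + r^{1-2m}v_m\Delta v_m + (1-2m)r^{-2m} v_m\,\partial_r v_m.
\end{equation*}
Substituting the expression for $\Delta v_m$ coming from \eqref{equav1new}, the $2\rho'$-term cancels exactly against the $-2\rho'\langle X,\nabla r\rangle$ factor on the right-hand side of \eqref{Gder1}, and the coefficient of $r^{-2m} v_m\,\partial_r v_m$ collapses to $1$. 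Lemma \ref{Keylemma1} then yields
\begin{equation*}
\frac{d}{dr}\int_{S_r} r^{1-2m} v_m \partial_r v_m\,e^{-2\rho}\,dx = \int_{S_r}\!\bigl[r^{1-2m}|\nabla v_m|^2 - r^{1-2m}q_0 v_m^2 + r^{-2m}v_m \partial_r v_m\bigr]e^{-2\rho}\,dx.
\end{equation*}
Integrating in $r$ from $t$ to $\infty$ and discarding the limit at infinity via hypothesis \eqref{Gequ834} (taking a subsequence realizing the $\liminf$) produces
\begin{equation*}
\int_{r>t} r^{1-2m}[|\nabla v_m|^2-q_0 v_m^2]e^{-2\rho}dx = -\!\int_{S_t}\! t^{1-2m} v_m\partial_r v_m\,e^{-2\rho}dx - \!\int_{r>t}\! r^{-2m} v_m\partial_r v_m\,e^{-2\rho}dx.
\end{equation*}

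The last step is to convert the $S_t$-boundary term into the three pieces shown in the statement. Writing $v_m\partial_r v_m = \tfrac{1}{2}\partial_r(v_m^2)$ and applying Lemma \ref{Keylemma2} with $f = v_m^2$ gives
\begin{equation*}
\int_{S_t} v_m\partial_r v_m\,e^{-2\rho}dx = \tfrac{1}{2}\tfrac{d}{dt}\!\int_{S_t}\! v_m^2 e^{-2\rho}dx - \tfrac{1}{2}\int_{S_t}\! v_m^2(\Delta r - 2\rho')e^{-2\rho}dx.
\end{equation*}
Multiplying by $t^{1-2m}$ and moving the $t^{1-2m}$ inside the derivative via the product rule introduces exactly the $\frac{1}{2}(2m-1)t^{-2m}\int_{S_t} v_m^2 e^{-2\rho}dx$ correction, after which the three terms on the right-hand side of the claimed identity appear on the nose.

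None of the steps is really an obstacle; the only subtle point is justifying the passage to the limit $r\to\infty$ from the hypothesis \eqref{Gequ834}, which only provides a $\liminf$. This is handled by first integrating from $t$ to $R$, choosing a sequence $R_k\to\infty$ along which the boundary integral $\int_{S_{R_k}}R_k^{1-2m}|v_m \partial_r v_m|e^{-2\rho}dx\to 0$, and then passing to the limit in the identity. The rest is bookkeeping with Lemmas \ref{Keylemma1}--\ref{Keylemma2}.
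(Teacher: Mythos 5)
Your proposal is correct and follows essentially the same route as the paper: applying Lemma \ref{Keylemma1} to $X=r^{1-2m}v_m\nabla v_m$ and integrating in $r$ is exactly the paper's step of multiplying \eqref{equav1new} by $r^{1-2m}v_m e^{-2\rho}$ and integrating by parts over $\{t\le r\le r_j\}$, with the far boundary term discarded along a subsequence realizing the $\liminf$ in \eqref{Gequ834}. The remaining conversion of the $S_t$-term via Lemma \ref{Keylemma2} with $f=v_m^2$ (yielding the $\frac{1}{2}(2m-1)t^{-2m}$ correction) is also identical to the paper's use of \eqref{Glemma31}.
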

 \begin{proof}
 By  \eqref{Gequ834}, there exists a sequence $r_j$ going to infinity such that
 \begin{equation}\label{Gequ834new}
   \int_{S_{r_j}}r^{1-2m}|\frac{\partial v_m}{\partial r}v_m|e^{-2\rho}dx=o(1).
\end{equation}
By Lemma \ref{Keylemma2}, one has
\begin{eqnarray}
% \nonumber to remove numbering (before each equation)
\nonumber   -\frac{1}{2}\frac{d}{dt}(t^{1-2m}\int_{S_t}v_m^2e^{-2\rho}dx) &=& -\frac{1}{2}(1-2m)t^{-2m}\int_{S_t}v_m^2e^{-2\rho}dx-\frac{1}{2}t^{1-2m}\int_{S_t}(\Delta r -2\rho^{\prime})v_m^2e^{-2\rho}dx \\
  && -  t^{1-2m}\int_{S_t}\frac{\partial v_m}{\partial r} v_me^{-2\rho}dx.\label{Glemma31}
\end{eqnarray}
Multiplying $r^{1-2m}v_m e^{-2\rho}$ on both sides of eigen-equation \eqref{equav1new} and integration by part, one has
\begin{eqnarray*}
% \nonumber to remove numbering (before each equation)
  \int_{t\leq r\leq r_j} r^{1-2m}q_0v_m^2e^{-2\rho}dx &=& \int_{t\leq r\leq r_j} (1-2m)r^{-2m}\frac{\partial v_m}{\partial r}v_me^{-2\rho}dx \\
   && +  \int_{t\leq r\leq r_j} r^{1-2m} |\nabla v_m|^2   e^{-2\rho}dx+\int_{t\leq r\leq r_j}2mr^{-2m} \frac{\partial v_m}{\partial r}v_m e^{-2\rho}dx\\
    &&+\int_{S_{t}} r^{1-2m}\frac{\partial v_m}{\partial r}v_m e^{-2\rho}dx-\int_{S_{r_j}} r^{1-2m}\frac{\partial v_m}{\partial r}v_m e^{-2\rho}dx.
\end{eqnarray*}
By \eqref{Gequ834new}, one has
\begin{eqnarray}
% \nonumber to remove numbering (before each equation)
 \nonumber \int_{ r> t} r^{1-2m}q_0v_m^2e^{-2\rho}dx &=& \int_{  r>t} r^{-2m}\frac{\partial v_m}{\partial r}v_me^{-2\rho}dx \\
   && +  \int_{  r>t} r^{1-2m} |\nabla v_m|^2   e^{-2\rho}dx+\int_{S_{t}} r^{1-2m}\frac{\partial v_m}{\partial r}v_m e^{-2\rho}dx.\label{Glemma32}
\end{eqnarray}
Now the lemma follows by \eqref{Glemma31} and \eqref{Glemma32}.
 \end{proof}

\section{Construction of the energy functions}

%Let  $s<\mu $ and be close to $\mu$.
In this section, we will give the general construction of energy functions and derive the formulas for their derivatives.
Those derivation can be found in \cite{LiuI}. In order to make the readers easy to understand, we  give the details here.
Let
\begin{eqnarray}
% \nonumber to remove numbering (before each equation)
  F(m,r,s) &=& r^s\int_{S_r}\frac{1}{2}[\frac{m(m+1)}{r^2}+q]v_m^2e^{-2\rho}dx \label{Gdefenergy}\\
 \nonumber &&+r^s\int_{S_r}[|\frac{\partial v_m}{\partial r}|^2-\frac{1}{2}|\nabla v_m|^2)]e^{-2\rho}dx \\
  \nonumber &=& {\rm I} +{\rm II},
\end{eqnarray}
where
\begin{eqnarray*}
% \nonumber to remove numbering (before each equation)
  {\rm I} &=& r^s\int_{S_r}[|\frac{\partial v_m}{\partial r}|^2-\frac{1}{2}|\nabla v_m|^2]e^{-2\rho}dx \\
   &=& \frac{1}{2} r^s\int_{S_r}[|\frac{\partial v_m}{\partial r}|^2-|\nabla_{\omega} v_m|^2]e^{-2\rho}dx ,
\end{eqnarray*}
and
\begin{equation*}
   {\rm II}= \frac{1}{2}r^s\int_{S_r}[\frac{m(m+1)}{r^2}+q]v_m^2 e^{-2\rho}dx.
\end{equation*}
We begin with the derivation of $\frac{\partial }{\partial r}{\rm I}$.
By Lemma \ref{Keylemma2},  one has
\begin{eqnarray*}
% \nonumber to remove numbering (before each equation)
  \frac{\partial }{\partial r}{\rm I} &=& sr^{s-1}\int_{S_r }[\frac{1}{2}|\frac{\partial v_m}{\partial r}|^2-\frac{1}{2}|\nabla_{\omega} v_m|^2]e^{-2\rho}dx+r^s\int_{S_r}[\frac{\partial v_m}{\partial r}\frac{\partial^2v_m}{\partial r^2}-\frac{1}{2} \frac{\partial }{\partial r}\langle\nabla_{\omega} v_m,\nabla_{\omega}v_m\rangle]e^{-2\rho} dx\\
   && +r^s\int_{S_r}  \frac{1}{2}(\Delta r-2\rho^{\prime})[|\frac{\partial v_m}{\partial r}|^2-|\nabla_{\omega} v_m|^2]e^{-2\rho}dx.
\end{eqnarray*}
Using $ \Delta v_m=\frac{\partial^2v_m}{\partial r^2}+\Delta r\frac{\partial v_m}{\partial r}+\Delta_{\omega} v_m$, we get
\begin{eqnarray*}
% \nonumber to remove numbering (before each equation)
  \frac{\partial }{\partial r}{\rm I} &=& \int_{S_r}[\frac{s}{2}r^{s-1}|\frac{\partial v_m}{\partial r}|^2-2r^s\rho ^{\prime}|\frac{\partial v_m}{\partial r}|^2+r^s\frac{\partial v_m}{\partial r} \Delta v_m+ \frac{r^s}{2}(2\rho^{\prime}-\Delta r)|\frac{\partial v_m}{\partial r}|^2 ]e^{-2\rho}dx\\
   &&+
   r^{s}\int_{S_r}[(  -\frac{s}{2r }+\frac{1}{2}(2\rho^{\prime}-\Delta r))\hat{g}(\nabla v_m,\nabla v_m)]e^{-2\rho}dx\\
   &&+r^s\int_{S_r}[\langle\nabla_{\omega} \frac{\partial v_m}{\partial r},\nabla_{\omega}v_m\rangle-\frac{1}{2} \frac{\partial }{\partial r}\langle\nabla_{\omega} v_m,\nabla_{\omega}v_m\rangle]e^{-2\rho} dx.
\end{eqnarray*}
By some basic computations, one has
\begin{eqnarray*}
% \nonumber to remove numbering (before each equation)
  \langle\nabla_{\omega} \frac{\partial v_m}{\partial r},\nabla_{\omega}v_m\rangle-\frac{1}{2} \frac{\partial }{\partial r}\langle\nabla_{\omega} v_m,\nabla_{\omega}v_m\rangle &=& \langle\nabla_{\omega} \frac{\partial v_m}{\partial r},\nabla_{\omega}v_m\rangle- \langle\nabla _{ \frac{\partial }{\partial r}}\nabla_{\omega} v_m,\nabla_{\omega}v_m\rangle \\
   &=& (\nabla dr)(\nabla_{\omega}v_m,\nabla_{\omega}v_m) .
\end{eqnarray*}
Finally, we get
\begin{eqnarray}
% \nonumber to remove numbering (before each equation)
 \nonumber \frac{\partial }{\partial r}{\rm I} &=& \int_{S_r}[\frac{s}{2}r^{s-1}|\frac{\partial v_m}{\partial r}|^2-2r^s\rho ^{\prime}|\frac{\partial v_m}{\partial r}|^2+r^s\frac{\partial v_m}{\partial r} \Delta v_m+ \frac{r^s}{2}(2\rho^{\prime}-\Delta r)|\frac{\partial v_m}{\partial r}|^2 ]e^{-2\rho}dx\\
   &&+
   r^{s}\int_{S_r}[( \nabla dr +(-\frac{s}{2r }+\frac{1}{2}(2\rho^{\prime}-\Delta r))\hat{g})(\nabla v_m,\nabla v_m)]e^{-2\rho}dx.\label{GpartialI}
\end{eqnarray}
Now we are in the position to obtain $ \frac{\partial}{\partial r}{\rm II}$.
By Lemma \ref{Keylemma2} again, one has
\begin{eqnarray}
% \nonumber to remove numbering (before each equation)
  \nonumber  \frac{\partial}{\partial r}{\rm II}&= &\int_{S_r}  [\frac{\partial}{\partial r} \frac{r^s}{2}(\frac{m(m+1)}{r^2} +q)v_m^2]e^{-2\rho}dx \\
 \nonumber   &&+\int_{S_r}(\Delta r-2\rho^{\prime})\frac{r^s}{2}[\frac{m(m+1)}{r^2}+q]v_m^2e^{-2\rho}dx\\
 \nonumber   &=& \int_{S_r}  [ \frac{s-2}{2} r^{s-3}m(m+1) +\frac{r^s}{2}\frac{\partial q}{\partial r}+\frac{s}{2}r^{s-1}q]v_m^2 e^{-2\rho}dx\\
   %&&+\frac{s-1}{2}mr^{s-2}(2\rho^{\prime}-\Delta r)+\frac{m}{2}r^{s-1}(2\rho^{\prime\prime}-\frac{\partial \Delta r}{\partial r})]v_m^2 e^{-2\rho}dx\\
 \nonumber   &&+r^s \int_{S_r}[ \frac{m(m+1)}{r^2}+q]v_m\frac{\partial v_m}{\partial r} e^{-2\rho}dx\\
   &&+\frac{r^s}{2}\int_{S_ r}(\Delta r-2\rho^{\prime})[\frac{m(m+1)}{r^2}+q]v_m^2e^{-2\rho}dx.\label{GpartialII}
\end{eqnarray}
Putting \eqref{GpartialI} and \eqref{GpartialII} together  and using \eqref{equav1new}, we obtain
\begin{eqnarray}
% \nonumber to remove numbering (before each equation)
 \label{Gpartial} \frac{\partial F(m,r,s)}{\partial r} &=&r^{s-1}\int_{S_r} [( r(\nabla dr) -(\frac{s}{2 }+\frac{\bar{\delta}}{2})\hat{g})(\nabla v_m,\nabla v_m)]e^{-2\rho} dx\\
   \nonumber &&+r^{s-1}\int_{S_r} [2m -\frac{\bar{\delta}}{2}+\frac{s}{2}]|\frac{\partial v_m}{\partial r}|^2 e^{-2\rho}dx\\
   \nonumber &&+r^{s-1}\int_{S_r}[r (V_0+V_1+V_2+q-\lambda)+\frac{m}{r}\bar{\delta}]\frac{\partial v_m}{\partial r}v_m  e^{-2\rho}dx\\
   %&&+[r^s( q_2+\lambda-\frac{t}{r})]\frac{\partial v}{\partial r}v\\
 % \nonumber  &&+\int_{S_r}[r^s \frac{q_1}{2}(\frac{m}{r}+2\rho^{\prime}-\Delta r)+]\frac{\partial v_m}{\partial r}v_me^{-2\rho}dx\\
  \nonumber  && +r^{s-1}\int_{S_r} [\frac{s-2}{2} \frac{m(m+1)}{r^2}+\frac{1}{2}r\frac{\partial q}{\partial r}+\frac{s}{2}q]v_m^2e^{-2\rho}dx\\
  % &&+\frac{s-1}{2}mr^{s-2}(2\rho^{\prime}-\Delta r)+\frac{m}{2}r^{s-1}(2\rho^{\prime\prime}-\frac{\partial \Delta r}{\partial r})\\
  \nonumber  &&+r^{s-1}\int_{S_r}\frac{1}{2}\bar{\delta}[\frac{m(m+1)}{r^2}+q]v_m^2e^{-2\rho}dx.
 %  \nonumber  && +\int_{S_r}-\frac{r^s}{2}  q_1[\frac{m(m+1)}{r^2}+\frac{m}{r}(2\rho^{\prime}-\Delta r)-V_0-V_1-V_2+\lambda] v_m^2e^{-2\rho}dx
\end{eqnarray}
Let $m=0$ and $F(r,s)=F(0,r,s)$, we obtain

\begin{eqnarray}
% \nonumber to remove numbering (before each equation)
\label{Gpartialnew}\frac{\partial F(r,s)}{\partial r} &=&r^{s-1}\int_{S_r}  [ (r(\nabla dr) -(\frac{s}{2 }+\frac{1}{2}\bar{\delta})\hat{g})(\nabla v,\nabla v)]e^{-2\rho} dx\\
   \nonumber &&+r^{s-1}\int_{S_r} [-\frac{\bar{\delta}}{2}+\frac{s}{2}]|\frac{\partial v}{\partial r}|^2 e^{-2\rho}dx\\
   \nonumber &&+r^{s-1}\int_{S_r}[r (V_0+V_1+V_2+q-\lambda)]\frac{\partial v}{\partial r}v  e^{-2\rho}dx\\
   %&&+[r^s( q_2+\lambda-\frac{t}{r})]\frac{\partial v}{\partial r}v\\
% \nonumber   && + \int_{S_r} [\frac{1}{2}sr^{s-1} q_1+\frac{1}{2}r^s\frac{\partial q_1}{\partial r}]\frac{\partial v}{\partial r}v e^{-2\rho}dx\\
 % \nonumber  &&+\int_{S_r}[r^s \frac{q_1}{2}(\frac{m}{r}+2\rho^{\prime}-\Delta r)+]\frac{\partial v_m}{\partial r}v_me^{-2\rho}dx\\
  \nonumber  && +r^{s-1}\int_{S_r} [\frac{1}{2}r\frac{\partial q}{\partial r}+\frac{s}{2}q+\frac{\bar{\delta}}{2}q]v^2e^{-2\rho}dx.
  % &&+\frac{s-1}{2}mr^{s-2}(2\rho^{\prime}-\Delta r)+\frac{m}{2}r^{s-1}(2\rho^{\prime\prime}-\frac{\partial \Delta r}{\partial r})\\
 % \nonumber  &&+r^{s-1}\int_{S_r}\frac{\bar{\delta}}{2}qv^2e^{-2\rho}dx.
 %  \nonumber  && +\int_{S_r}-\frac{r^s}{2}  q_1[-V_0-V_1-V_2+\lambda] v^2e^{-2\rho}dx.
\end{eqnarray}
\section{Exponential  decay of the eigensolutions}
In this section, we will prove the exponential  decay of the eigensolution based on the weakly  exponential  decay assumption and then show such  eigensolution  is trivial.
\begin{theorem}\label{Expthm1}
Suppose
\begin{equation*}%\label{GInitial7new}
   \int_{r \geq r_0}(|\nabla u|^2+u^2)e^{c\sqrt{r}}dx<\infty,
\end{equation*}
for some $c>0$,
then
\begin{equation*}%\label{GInitial7new1}
   \int_{r \geq r_0}(|\nabla u|^2+u^2)e^{c {r}}dx<\infty,
\end{equation*}
for some $c>0$.
\end{theorem}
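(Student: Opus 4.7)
The plan is a weighted energy (Agmon/Carleman-type) estimate in the test-function methodology of \cite{kumuraflat}, upgrading the sub-exponential weight $e^{c\sqrt{r}}$ to a genuine exponential weight $e^{c'r}$ in one step. The positive spectral gap $\lambda > b^2/4$ guaranteed by \eqref{Gconl} is what makes a linear exponential weight admissible at infinity, and the sub-exponential hypothesis provides the local $L^2$ control needed to close the cut-off along a subsequence.

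First, I would transform to $v = e^{\rho} u$ with $2\rho' = b + c/r$ so that $v$ satisfies \eqref{equav}; since $e^{2\rho}$ is at most polynomial in $r$, the sub-exponential hypothesis on $u$ transfers to $v$ against the measure $e^{-2\rho}\,dx$. For fixed small $\tau > 0$ and large $R$, I would introduce a smooth radial profile $\phi_{\tau, R}$ from the class of test functions of \cite{kumuraflat}, with $\phi'_{\tau, R}(r) = \tau$ on $[r_0, R]$, $\phi'_{\tau, R}(r) = 0$ on $[R+1, \infty)$, and with a sign-definite structure in $\phi''_{\tau, R}$ on the transition shell. Pairing \eqref{equav} against an appropriate multiplier of the form $(\alpha v + \beta v_r)\, e^{2\phi_{\tau, R}-2\rho}$ and integrating by parts via Lemmas \ref{Keylemma1} and \ref{Keylemma2}---the first-order term $2\rho' v_r v\, e^{2\phi-2\rho}$ from the equation cancels the $-2\nabla\rho$ contribution coming from $\nabla(e^{2\phi-2\rho})$---I would derive a weighted identity of the schematic form
\begin{equation*}
  \int \bigl(|\nabla v|^2 + v_r^2 + \mathcal{Q}_{\tau, R}(r)\, v^2\bigr)\, e^{2\phi_{\tau, R}-2\rho}\, dx \;=\; \mathcal{B}(r_0) + \mathcal{C}(R),
\end{equation*}
where $\mathcal{Q}_{\tau, R}$ is the commutator coefficient, engineered via the combination $(\alpha, \beta)$ and the profile $\phi_{\tau, R}$ to satisfy $\mathcal{Q}_{\tau, R}(r) \geq \eta(\lambda - b^2/4) > 0$ for $r \geq R_0$ large; $\mathcal{B}(r_0)$ is a finite boundary contribution, and $\mathcal{C}(R)$ is the transition-shell error supported in $\{R \le r \le R+1\}$.

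The main obstacle is the control of $\mathcal{C}(R)$. With a naive linear cut-off $\phi_{\tau,R}(r) = \tau \min(r, R)$, one would have $|\mathcal{C}(R)| \sim e^{2\tau R} \int_R^{R+1} v^2 e^{-2\rho}\, dx$, and since $e^{c\sqrt{R}} \ll e^{2\tau R}$ for large $R$, the sub-exponential hypothesis does not absorb this directly. The class of test functions in \cite{kumuraflat} is designed precisely to circumvent this obstruction: the sign-definite commutator generated by $\phi''_{\tau, R}$ in the transition shell, combined with a Hardy-type absorption into the $|\nabla v|^2$ reservoir on the left, reduces $\mathcal{C}(R)$ to a quantity controlled by $\int_R^{R+1} v^2 e^{-2\rho}\, dx$ alone, without the $e^{2\tau R}$ factor. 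By Fubini, the finiteness of $\int v^2 e^{c\sqrt{r}} e^{-2\rho}\, dx$ yields a sequence $R_j \to \infty$ along which $\int_{R_j}^{R_j+1} v^2 e^{-2\rho}\, dx \to 0$, hence $\mathcal{C}(R_j) \to 0$. Passing to the limit $R_j \to \infty$ yields $\int (|\nabla v|^2 + v^2)\, e^{2\tau r - 2\rho}\, dx < \infty$; reverting $v \mapsto u$ costs only polynomial factors and delivers the claimed exponential decay. The hard step will be designing the pair $(\alpha, \beta)$ and the test function $\phi_{\tau, R}$ simultaneously so that both the positivity of $\mathcal{Q}_{\tau, R}$ at infinity and the Hardy-type absorption of $\mathcal{C}(R)$ hold for the same choice.
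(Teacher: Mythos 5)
The crux of your argument is the claim that the transition-shell error $\mathcal{C}(R)$ can be reduced to $\int_{R\le r\le R+1} v^2 e^{-2\rho}dx$ \emph{without} the factor $e^{2\tau R}$, and this is precisely where the proposal breaks. Every term produced by differentiating the truncated profile $\phi_{\tau,R}$ is supported in the shell $\{R\le r\le R+1\}$ and enters your identity multiplied by the weight $e^{2\phi_{\tau,R}}\approx e^{2\tau R}$ carried by the multiplier itself; no choice of the pair $(\alpha,\beta)$ and no Hardy-type absorption into the $|\nabla v|^2$ reservoir can strip that weight off --- absorption trades terms against one another at the price of constants, it never removes the weight they all share. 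The legitimate ways out are either to arrange that \emph{all} shell contributions have a favorable sign (so they are discarded and one closes the estimate uniformly in $R$, with no subsequence needed), or to use a family of globally bounded weights; you do neither, and your Fubini step does not rescue the argument: along the $R_j$ it furnishes one only knows $\int_{R_j\le r\le R_j+1}v^2e^{c\sqrt r}e^{-2\rho}dx\to 0$, so the shell error is still of size $e^{2\tau R_j-c\sqrt{R_j}}\cdot o(1)$, which diverges. Since the entire content of the theorem is the bridge between $e^{c\sqrt r}$ and $e^{\tau r}$, asserting that the test functions of \cite{kumuraflat} ``circumvent this obstruction'' without exhibiting the mechanism leaves the proof with a hole at its only genuinely hard point.

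For comparison, the paper never introduces an exponential weight at this stage. It works with the polynomial family $v_m=r^m v$, for which the hypothesis $\int(|\nabla u|^2+u^2)e^{c\sqrt r}dx<\infty$ does give vanishing boundary terms along a subsequence for \emph{every fixed} $m$ (because $r^{2m}\ll e^{c\sqrt r}$); from \eqref{Gpartial}, Lemma \ref{Keylemma3} and Cauchy--Schwarz it extracts a differential inequality for $G(t)=t^{1-2m}\int_{S_t}v_m^2e^{-2\rho}dx=t\int_{S_t}v^2e^{-2\rho}dx$, whose favorable coefficient is $\epsilon-C_1m^2/t^2$ up to a remainder absorbed by taking $\alpha$ large. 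The punchline is that $G$ does not depend on $m$, so $m$ may be chosen proportional to $t$ (namely $m/t=\sqrt{\epsilon/C_1}$), turning the polynomial weights into an effective exponential one and yielding $G'(t)\le-\epsilon G(t)$, hence $G(t)\le e^{-\epsilon t}$; the gradient bound then follows from the eigen-equation \eqref{equav} by one more integration by parts. If you wish to salvage your scheme, you must either make every shell term sign-definite, or replace $\phi_{\tau,R}$ by uniformly bounded weights (e.g.\ $\tau r/(1+r/R)$) with estimates uniform in $R$ and pass to the limit by monotone convergence; as written, the proposal does not prove the theorem.
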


\begin{proof}

By the assumption, we
have
\begin{equation}\label{GInitial7new}
   \int_{r \geq r_0}(|\nabla v|^2+v^2)e^{c\sqrt{r}}e^{-2\rho}dx<\infty,
\end{equation}
for some $c>0$.
Let $q_1=\lambda-\frac{b^2}{4}-\frac{bc}{2r}-V_2$ in the energy function \eqref{Gdefenergy}.
By \eqref{Gpartial}, we have
\begin{equation}\label{Gpartialcase1new}
     \frac{\partial F(m,r,s)}{\partial r} +\alpha \frac{\partial }{\partial r}\int_{S_r}r^{s-1}v_m^2e^{-2\rho}dx
\end{equation}
\begin{eqnarray}
% \nonumber to remove numbering (before each equation)
&=& r^{s-1}\int_{S_r}[ (r(\nabla dr) -(\frac{s}{2 }+\frac{1}{2}\bar{\delta})\hat{g})(\nabla v_m,\nabla v_m)]e^{-2\rho}dx\\
  %&=& \frac{\partial{\rm I}}{\partial r} +\frac{\partial {\rm II}}{\partial r}+\frac{\partial{\rm III}}{\partial r}\\
 \label{Gpartialcase2new}  &&+r^{s-1}\int_{S_r}[2m -\frac{\bar{\delta}}{2}+\frac{s}{2}]|\frac{\partial v_m}{\partial r}|^2e^{-2\rho}dx\\
   %&&+[-r^s(-V_0-V_1-V_2+\lambda)]\frac{\partial v}{\partial r}v\\
   %&&+[r^{s-1}(\)]\frac{\partial v}{\partial r}v\\
   %&& + [\frac{1}{2} q_1r^s  (\Delta r-2\rho^{\prime})+sr^{s-1}\frac{1}{2} q_1+(\frac{1}{2}r^s\frac{\partial q_1}{\partial r})]\frac{\partial v}{\partial r}v\\
  \label{Gpartialcase3new} &&+r^{s-1}\int_{S_r}[\frac{\bar{\delta} b}{2}+\bar{\delta}\frac{m} {r}+2\alpha+o(1)]\frac{\partial v_m}{\partial r}v_me^{-2\rho}dx\\
  \label{Gpartialcase4new} && +r^{s-1}\int_{S_r}[(\lambda-\frac{b^2}{4})(\frac{s}{2}+\frac{\bar{\delta}}{2})+o(1)]v_m^2e^{-2\rho}dx\\
  % &&+\frac{s-1}{2}mr^{2s-2}(2\rho^{\prime}-\Delta r)+\frac{m}{2}r^{s-1}(2\rho^{\prime\prime}-\frac{\partial \Delta r}{\partial r})\\
   \label{Gpartialcase5new}&&+r^{s-1}\int_{S_r} \frac{m(m+1)}{r^2}[ \frac{s-2}{2}+\frac{\bar{\delta}}{2}]v_m^2 e^{-2\rho}dx.
\end{eqnarray}
By the assumption \eqref{GInitial7new},
  for any $m$, there exists a sequence $r_j$ such that
 \begin{equation}\label{newequa1}
   F(m,r_j,s)+\alpha \int_{S_{r_j}}r^{s-1}v_m^2e^{-2\rho}dx=o(1).
 \end{equation}
 By   assumption  \eqref{Gcons}, there exists $0<s_0<1$
 such that
 \begin{equation*}
     r(\nabla dr) -(\frac{s_0}{2 }+\frac{1}{2}\bar{\delta})>0.
 \end{equation*}
 By the Cauchy Schwartz inequality, one has
 \begin{equation*}
    \eqref{Gpartialcase2new}+\eqref{Gpartialcase3new}+\eqref{Gpartialcase4new}+\eqref{Gpartialcase5new}\geq r^{s-1}\int_{S_r}[\epsilon-C_1\frac{m^2}{r^2}]v^2_me^{-2\rho}dx,
 \end{equation*}
 for large $m$ and $r$.
 Combining with \eqref{Gpartialcase1new} and \eqref{newequa1}, we have
 \begin{equation*}%\label{newequa2}
    -F(m,r,s_0)-\alpha\int_{S_r}r^{s_0-1}v_m^2e^{-2\rho}dx\geq \int_{r(x)>r} r(x)^{s_0-1}[\epsilon-C_1\frac{m^2}{r(x)^2}]v^2_me^{-2\rho}dx.
 \end{equation*}
 That is
 \begin{equation}\label{newequa3}
    r^{s_0}\int_{S_r}[\frac{1}{2}|\nabla v_m|^2-|\frac{\partial v_m}{\partial r}|^2]e^{-2\rho}dx-r^{s_0}\int_{S_r}\frac{1}{2}[\frac{m(m+1)}{r^2}+q_1]v_m^2e^{-2\rho}dx-\alpha\int_{S_r}r^{s_0-1}v_m^2e^{-2\rho}dx
 \end{equation}
 \begin{equation*}
    \geq \int_{r(x)>r} r(x)^{s_0-1}[\epsilon-C_1\frac{m^2}{r(x)^2}]v^2_me^{-2\rho}dx.
 \end{equation*}
Multiplying both side of \eqref{newequa3}  by $ r^{ 1-s_0- 2m } $ and integrating it with respect to $r$ over $[t,\infty)$,
we have
\begin{equation*}
    \int_{r>t}r^{1-2m}[|\nabla v_m|^2-(\frac{m(m+1)}{r^2}+q_1)v_m^2]e^{-2\rho}dx-2\alpha\int_{r>t}r^{-2m}v_m^2e^{-2\rho}dx-2\int_{r>t}r^{1-2m}|\frac{\partial v_m}{\partial r}|^2e^{-2\rho}dx
\end{equation*}
\begin{eqnarray}
% \nonumber to remove numbering (before each equation)
 \nonumber  &\geq& \int_{r>t}r^{1-s_0-2m}dr  \int_{r(x)>r} r(x)^{s_0-1}[\epsilon-C_1\frac{m^2}{r(x)^2}]v^2_me^{-2\rho}dx \\
    &\geq& [\epsilon-C_1\frac{m^2}{t^2}] \int_{r>t}r^{1-s_0-2m}dr  \int_{r(x)>r} r(x)^{s_0-1}v^2_me^{-2\rho}dx \label{neqequa5}
\end{eqnarray}
By Cauchy Schwartz inequality again, one has
\begin{equation}\label{neqequa6}
   |\int_{r>t}r^{-2m}\frac{\partial v_m}{\partial r}v_me^{-2\rho}dx|\leq  \int_{r>t}\frac{1}{2}r^{-2m}[|\frac{\partial v_m}{\partial r}|^2+v_m^2]e^{-2\rho}dx.
\end{equation}
By   Lemma \ref{Keylemma3}, one has
\begin{equation}\label{Gequ35}
    \int_{r>t} r^{1-2m}[|\nabla v_m|^2-q_0v_m^2]e^{-2\rho} dx= -\frac{1}{2}\frac{d}{dt}(t^{1-2m}\int_{S_t}v_m^2e^{-2\rho}dx)-\frac{1}{2}\int_{S_t}r^{-2m}(2m-1)v_m^2e^{-2\rho}dx
\end{equation}
\begin{equation*}
  \;\;\;\;\;\;\;\;\;\;\;\;\;\;\;\;\;\;\;\;\;\;\;\;\;\;\;\;\;\;\;\;\;\;\;\;\;\;\;\;\;\;\;\;\;\;\;\; \;\;\;\;\;\;\;\;\;\; +\frac{1}{2}\int_{S_t}t^{1-2m} (\Delta r-2\rho^{\prime})v_m^2e^{-2\rho} dx-\int_{r>t}r^{-2m}\frac{\partial v_m}{\partial r}v_me^{-2\rho}dx,
\end{equation*}
where  $q_0=\lambda-V_0-V_1-V_2+\frac{m(m+1)}{r^2}+\frac{m}{r}(2\rho^{\prime}-\Delta r)$.

By the definition of $q_0$ and $q_1$, we have
\begin{equation}\label{neqequa7}
    |q_0-q_1-\frac{m(m+1)}{r^2}|\leq 2\frac{m}{r^2}|\delta|+O(\frac{1}{r}).
\end{equation}
By  \eqref{neqequa5}, \eqref{neqequa6}, \eqref{Gequ35} and  \eqref{neqequa7},
we have
\begin{equation*}
     -\frac{1}{2}\frac{d}{dt}(t^{1-2m}\int_{S_t}v_m^2e^{-2\rho}dx)-\frac{1}{4}\int_{S_t}t^{-2m}(2m-1)v_m^2e^{-2\rho}dx -\frac{1}{2}\alpha\int_{r>t}r^{-2m}v_m^2e^{-2\rho}dx+C_2\int_{r>t} r^{-2m} \frac{m}{r}v_m^2e^{-2\rho}dx
\end{equation*}
\begin{equation}\label{neqequa9}
   \geq [\epsilon-C_1\frac{m^2}{t^2}] \int_{r>t}r^{1-s_0-2m}dr  \int_{r(x)>r} r(x)^{s_0-1}v^2_me^{-2\rho}dx ,
\end{equation}
for large $m$ and $t$.
Notice that $C_1$ and $C_2$ do not depend on $\alpha$.
For any large $t$, let
  $m$ be such that
\begin{equation*}
    \epsilon- C_1\frac{m^2}{t^2}=0.
\end{equation*}
That is $\frac{m}{t}=\sqrt{\frac{\epsilon}{C_1}}$.

Let $\alpha$ be large enough so that
\begin{equation*}
      -\frac{1}{2}\alpha\int_{r>t}r^{-2m}v_m^2e^{-2\rho}dx+C_2\int_{r>t} r^{-2m} \frac{m}{r}v_m^2e^{-2\rho}dx<0,
\end{equation*}
since $\frac{m}{r}\leq \frac{m}{t}$ for $r\geq t$.
Thus, \eqref{neqequa9} leads to
\begin{equation*}%\label{last}
  -\frac{1}{2}\frac{d}{dt}(t^{1-2m}\int_{S_t}v_m^2e^{-2\rho}dx)-\frac{1}{4}\frac{m}{t}\int_{S_t}t^{1-2m}v_m^2e^{-2\rho}dx <0.
\end{equation*}
By the fact that $\frac{m}{t}=\sqrt{\frac{\epsilon}{C_1}}$,
we have
\begin{equation}\label{last1}
  -\frac{d}{dt}(t^{1-2m}\int_{S_t}v_m^2e^{-2\rho}dx)-\epsilon\int_{S_t}t^{1-2m}v_m^2e^{-2\rho}dx \geq 0
\end{equation}
for some $\epsilon>0$.
Let
\begin{equation*}
    G(t)=t^{1-2m}\int_{S_t}v_m^2e^{-2\rho}dx=t\int_{S_t}v^2e^{-2\rho}dx.
\end{equation*}
So \eqref{last1} becomes
\begin{equation*}%\label{last2}
    \frac{dG}{dt}\leq -\epsilon G(t)
\end{equation*}
This implies
\begin{equation*}
    G(t)\leq e^{-\epsilon t}
\end{equation*}
for some $ \epsilon>0$.
%By the relation between $v_m$ and $v$ ($v_m=r^mv$)
Thus,
we get
\begin{equation}\label{last3}
    \int_{S_r}v^2e^{-2\rho}dx\leq e^{-\epsilon r}.
\end{equation}
We will show that
\begin{equation}\label{last7}
    \int_{r \geq r_0}|\nabla v|e^{-2\rho}e^{\epsilon r} dx <\infty
\end{equation}
for some $\epsilon>0$.
By Cauchy Schwartz inequality and \eqref{GInitial7new},
one has
\begin{equation}\label{last8}
    \int_{r \geq r_0}|\frac{\partial v}{\partial r}v| e^{\epsilon r}e^{-2\rho}dx<\infty.
\end{equation}
Thus there exists sequence $r_j$
such that
\begin{equation*}
    \int_{S_{r_j}}|\frac{\partial v}{\partial r}v|e^{\epsilon r}e^{-2\rho}dx=o(1).
\end{equation*}
Using the eigenequation \eqref{equav} and integration by part, one has
\begin{equation}\label{last535}
    \int_{r_0\leq r \leq r_j}  |\nabla v|^2 e^{\epsilon r}  e^{-2\rho}dx+ \int_{S_{r}}\frac{\partial v}{\partial r}ve^{\epsilon r} e^{-2\rho}dx-\int_{S_{r_j}}\frac{\partial v}{\partial r}v e^{\epsilon r}e^{-2\rho}dx<\infty,
\end{equation}
since \eqref{last3} and \eqref{last8} hold.

By letting $j\to \infty$ in \eqref{last535}, we obtain
\begin{equation*}
    \int_{ r >r_0}  |\nabla v|^2 e^{\epsilon r}  e^{-2\rho}dx<\infty.
\end{equation*}
We finish the proof.

\end{proof}
\begin{theorem}\label{Expthm2}
Suppose %$r|\Delta r-2\rho^{\prime}|<O(1)$, $V_0=\frac{a^2}{4}+\frac{O(1)}{r}$ for large $r$ and
\begin{equation}\label{GInitial7}
   \int_{r\geq r_0}(|\nabla v|^2+v^2)e^{c {r}^{\sigma}}e^{-2\rho}dx<\infty,
\end{equation}
for any $\sigma<1$, then  $v\equiv0$.
\end{theorem}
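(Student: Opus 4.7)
The plan is to amplify the differential-inequality machinery used in Theorem \ref{Expthm1}, conclude that $v$ must decay faster than any exponential on the end, and then invoke unique continuation to obtain $v\equiv 0$ globally.

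First, I will revisit the derivation leading to \eqref{last1}. The hypothesis \eqref{GInitial7} is far stronger than the $e^{c\sqrt{r}}$ integrability used in the proof of Theorem \ref{Expthm1}: along suitable sequences $r_j\to\infty$, the boundary contributions $\int_{S_{r_j}} r^{1-2m}\bigl|\tfrac{\partial v_m}{\partial r}\,v_m\bigr|e^{-2\rho}\,dx$ used in Lemma \ref{Keylemma3} now vanish for essentially every choice of $m$ that grows no faster than $r^{1-\varepsilon}$. This unlocks a family of scalings of $m$ with $t$ that was unavailable under the weaker hypothesis of Theorem \ref{Expthm1}.

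Second, I will re-run the estimate culminating in \eqref{neqequa9}, but with $m=m(t)$ decoupled from the rigid balancing $m/t=\sqrt{\epsilon/C_1}$ used before, and with the auxiliary parameter $\alpha$ in \eqref{Gpartialcase1new} allowed to grow with $m$. A careful Cauchy--Schwarz absorption, exploiting the positivity $r\nabla dr - (\tfrac{s}{2}+\tfrac{\bar\delta}{2})\hat g > 0$ already used to produce $\epsilon>0$, should yield differential inequalities $G'(t) \le -N\,G(t)$ for any prescribed $N>0$, where $G(t)=t\int_{S_t}v^{2}e^{-2\rho}\,dx$. Together with the argument that produced \eqref{last7}, this will give $\int_{r\ge r_0}(|\nabla v|^{2}+v^{2})e^{Nr}e^{-2\rho}\,dx<\infty$ for every $N>0$, i.e.\ super-exponential decay of $v$ and $\nabla v$ on the end.

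Third, I will conclude $v\equiv 0$ from this super-exponential decay. For the elliptic equation $(-\Delta+V-\lambda)v=0$ with bounded coefficients, a Carleman-type weighted estimate built from the test functions of \cite{kumuraflat} and the positivity hypothesis $r\nabla dr\ge a\hat g$ (the same positivity that underlies the energy identity of Section 3) will force $v\equiv 0$ in a neighborhood of infinity. Aronszajn's strong unique continuation theorem, together with the connectedness of $M-\overline U$ and propagation across compact sets, then extends $v\equiv 0$ to all of $M$.

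The hardest part will be the amplification step. In Theorem \ref{Expthm1} the effective decay rate $\epsilon$ was pinned down by the positivity of the energy identity, so obtaining an arbitrarily large rate $N$ demands genuine new input from the stronger hypothesis \eqref{GInitial7}. One must keep track of how each of the constants ($C_1$, $C_2$, and the $\alpha$-dependent absorptions) scales with the enlarged $m$ and $t$, and tune these so that the coefficient multiplying $v_m^{2}$ on the right-hand side of \eqref{neqequa9} grows like $N$ while the boundary and cross terms stay harmless. A secondary difficulty is the final Carleman/unique-continuation step, where one must verify that the geometric assumptions on $\nabla dr$ and $\Delta r$ suffice to set up the required positive commutator estimate.
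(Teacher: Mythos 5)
Your first (amplification) step rests on a misreading of what limits the decay rate in the proof of Theorem \ref{Expthm1}. Under the weight $r^{1-2m}$ the boundary quantities in Lemma \ref{Keylemma3} are $r^{1-2m}|v_m\frac{\partial v_m}{\partial r}| = r|v\frac{\partial v}{\partial r}| + m\,v^2$ and $r^{1-2m}v_m^2 = r v^2$, i.e.\ essentially independent of $m$; their vanishing along a sequence was never the obstruction, so the stronger hypothesis \eqref{GInitial7} ``unlocks'' nothing at that point (and your proposed range $m\lesssim r^{1-\varepsilon}$ is in fact weaker than the linear scaling $m/t=\sqrt{\epsilon/C_1}$ already used). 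The real cap is the bulk sign condition $\epsilon - C_1 m^2/t^2\ge 0$ in \eqref{neqequa9}: the constants $\epsilon$ (from $(\lambda-\frac{b^2}{4})(\frac{s}{2}+\frac{\bar\delta}{2})$) and $C_1$ (from the Cauchy--Schwarz absorption of the $\frac{m}{r}\bar\delta\,\frac{\partial v_m}{\partial r}v_m$ cross term and of $\frac{m(m+1)}{r^2}(\frac{s-2}{2}+\frac{\bar\delta}{2})v_m^2$) are fixed by $\lambda,b,\delta,a$ and do not improve when you strengthen the integrability of $v$ or let $\alpha$ grow. Hence the rate in $G'(t)\le -\epsilon G(t)$ is pinned at $\sim\sqrt{\epsilon/C_1}$, and the claimed $G'\le -N G$ for arbitrary $N$ (super-exponential decay) cannot be extracted by re-tuning $m(t)$ and $\alpha$. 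Note also that your plan never uses the hypothesis ``for any $\sigma<1$'' in an essential way, which is a warning sign: that hypothesis is exactly what is needed to run a conjugation argument with arbitrarily large parameter.

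That conjugation argument is the actual content of the paper's proof, and in your plan it is deferred to a ``secondary difficulty'' rather than carried out. The paper sets $\bar v = e^{kr^{\theta}}v$ with $\theta<1$ close to $1$, builds the energy $\bar F$ in \eqref{GInitial835} with $\bar q$ containing $k^2\theta^2 r^{2\theta-2}$, and shows $\partial_r\!\left(r^{s_0}\bar F\right)>0$ for large $r$ uniformly in large $k$ (here is where $\theta<1$ makes the bad term $k^2\theta^2(\theta-1)r^{2\theta-2}$ harmless and $2a>\mu+\delta$, $\lambda>\frac{b^2}{4}$ give the positivity). Because $\theta<\sigma<1$, hypothesis \eqref{GInitial7} makes $r_j^{s_0}\bar F(r_j)\to 0$ along a sequence for \emph{every} $k$, so monotonicity forces $\bar F(r)\le 0$ for $r>R$ with $R$ independent of $k$; letting $k\to\infty$ in the expression of $\bar F$ in terms of $v$ then forces $\int_{S_r}v^2e^{-2\rho}dx=0$ for large $r$, and unique continuation finishes. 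If you could prove your step (b) (a Carleman/weighted estimate with arbitrarily large weight parameter), step (a) would be redundant; as written, step (a) fails and step (b) is precisely the unproven core. To repair your proposal, replace the amplification by the $e^{kr^\theta}$ conjugation above (or an equivalent Carleman estimate with sub-exponential weights compatible with \eqref{GInitial7}).
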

\begin{proof}
Let $\bar{v}=e^{kr^{\theta}}v$ and $\theta$ be close to 1.
In order to write down our proof smoothly, let $\bar{\rho}=kr^{\theta}$.
Direct computation implies
\begin{equation*}
    \Delta v= e^{-\bar{\rho}}\Delta \bar{v} -2\bar{\rho}^{\prime}e^{-\bar{\rho}}\frac{\partial \bar{v}}{\partial r}+(\bar{\rho}^{\prime 2}-\bar{\rho}^{\prime \prime}-\bar{\rho}^{\prime} \Delta r)e^{-\bar{\rho}}\bar{v}.
\end{equation*}
By \eqref{equav}, we get the equation of $\bar{v}$,
\begin{equation*}%\label{equav1}
    \Delta \bar{v}=2(\bar{\rho}^{\prime}+\rho^{\prime})\frac{\partial \bar{v}}{\partial r}+(V_1+V_2+V_0+\bar{V}_0-\lambda)\bar{v},
\end{equation*}
where
\begin{eqnarray*}
% \nonumber to remove numbering (before each equation)
   \bar{V}_0 &=& -\bar{\rho}^{\prime 2}+\bar{\rho}^{\prime}\Delta r+\bar{\rho}^{\prime\prime}-2\rho^{\prime}\bar{\rho}^{\prime} \\
   &=& -k^2\theta^2r^{2\theta-2}-k(1-\theta)\theta r^{\theta-2}+kr^{\theta-1}(\Delta r-2\rho^{\prime}).
\end{eqnarray*}

Define the  new energy function of $\bar{v}$ as follows
\begin{equation}\label{GInitial835}
    \bar{F}(r)=\int_{S_r} [\frac{1}{2}\bar{q}\bar{v}^2+|\frac{\partial \bar{v}}{\partial r}|^2-\frac{1}{2}|\nabla \bar{v}|^2]e^{-2\rho}dx,
\end{equation}
where $\bar{q}$ will be determined later.
By the assumption \eqref{Gcons}, there exists some $s_0$ (actually we can let  $s_0$ be close to $\delta$ and $s_0>\delta$) such that
\begin{equation*}
   | r(\Delta r-2\rho^{\prime})+s_0|<2a
\end{equation*}
and
\begin{equation*}
    r(\Delta r-2\rho^{\prime})+s_0>\gamma>0.
\end{equation*}

By the similar argument of \eqref{Gpartial} or  \eqref{Gpartialnew}, one has
\begin{eqnarray}
% \nonumber to remove numbering (before each equation)
   \nonumber\frac{\partial r^{s_0} \bar{F}(r)}{\partial r}&=&\int_{S_r} r^{s_0}[( \nabla dr-(\frac{s}{2r}+\frac{1}{2}(\Delta r-2\rho^{\prime}))\hat{g})(\nabla \bar{v},\nabla\bar{ v})]e^{-2\rho} dx\\
   \nonumber &&+\int_{S_r} r^{s_0}[\bar{q}\bar{v}\frac{\partial \bar{v}}{\partial r}+\frac{1}{2}\frac{\partial \bar{q}}{\partial r }\bar{v}^2+\frac{\partial \bar{v}}{\partial r}(\Delta  \bar{v}-\Delta r\frac{\partial \bar{v}}{\partial r})+(\Delta r-2\rho^{\prime})(\frac{1}{2}\bar{q}\bar{v}^2+\frac{1}{2}|\frac{\partial \bar{v}}{\partial r}|^2)]e^{-2\rho}dx\\
  \nonumber  &&+s_0r^{s_0-1}\int_{S_r}[\frac{1}{2}\bar{q}\bar{v}^2+\frac{1}{2}|\frac{\partial \bar{v}}{\partial r}|^2]e^{-2\rho}dx\\
   &=&r^{s_0-1}\int_{S_r} [(r \nabla dr-( \frac{s_0}{2}+\frac{1}{2}\bar{\delta})\hat{g})(\nabla \bar{v},\nabla\bar{ v})]e^{-2\rho} dx\label{last135}\\
  \nonumber  &&+r^{s_0-1}\int_{S_r}[-\frac{\bar{\delta}}{2}+\frac{s_0}{2}+2r\bar{\rho}^{\prime}]|\frac{\partial \bar{v}}{\partial r}|^2 e^{-2\rho}dx\\
   \nonumber &&+r^{s_0-1}\int_{S_r}r[\bar{q}+V_1+V_2+V_0+\bar{V}_0-\lambda]\frac{\partial \bar{v}}{\partial r}\bar{v}  e^{-2\rho}dx\\
   %&&+[r^s( q_2+\lambda-\frac{t}{r})]\frac{\partial v}{\partial r}v\\
% \nonumber   && + \int_{S_r} [\frac{1}{2}sr^{s-1} q_1+\frac{1}{2}r^s\frac{\partial q_1}{\partial r}]\frac{\partial v}{\partial r}v e^{-2\rho}dx\\
 % \nonumber  &&+\int_{S_r}[r^s \frac{q_1}{2}(\frac{m}{r}+2\rho^{\prime}-\Delta r)+]\frac{\partial v_m}{\partial r}v_me^{-2\rho}dx\\
  \nonumber  && +r^{s_0-1}\int_{S_r} [\frac{r}{2}\frac{\partial \bar{q}}{\partial r}+\frac{1}{2}\bar{\delta}\bar{q}+\frac{s_0}{2}\bar{q}]\bar{v}^2e^{-2\rho}dx.
 %  \nonumber  && +\int_{S_r}-\frac{r^s}{2}  q_1[-V_0-V_1-V_2+\lambda] v^2e^{-2\rho}dx.
\end{eqnarray}
Let $\bar{q}=\lambda-\frac{b^2}{4}-V_2+k^2\theta^2r^{2\theta-2}+k\theta(1-\theta)r^{\theta-2}$.
By \eqref{last135}, one has
\begin{eqnarray}
% \nonumber to remove numbering (before each equation)
   \nonumber\frac{\partial r^{s_0} \bar{F}}{\partial r}&=&r^{s_0-1}\int_{S_r} [( r\nabla dr-( \frac{s_0}{2}+\frac{1}{2}\bar{\delta})\hat{g})(\nabla \bar{v},\nabla\bar{ v})]e^{-2\rho} dx\\
   &&+r^{s_0-1}\int_{S_r}[2k\theta r^{\theta}+O(1)]|\frac{\partial \bar{v}}{\partial r}|^2 e^{-2\rho}dx\label{theta1}\\
   &&+r^{s_0-1}\int_{S_r}[O(1)+k\bar{\delta}r^{\theta-1}]\frac{\partial \bar{v}}{\partial r}\bar{v}  e^{-2\rho}dx\label{theta2}\\
   %&&+[r^s( q_2+\lambda-\frac{t}{r})]\frac{\partial v}{\partial r}v\\
% \nonumber   && + \int_{S_r} [\frac{1}{2}sr^{s-1} q_1+\frac{1}{2}r^s\frac{\partial q_1}{\partial r}]\frac{\partial v}{\partial r}v e^{-2\rho}dx\\
 % \nonumber  &&+\int_{S_r}[r^s \frac{q_1}{2}(\frac{m}{r}+2\rho^{\prime}-\Delta r)+]\frac{\partial v_m}{\partial r}v_me^{-2\rho}dx\\
   && +r^{s_0-1}\int_{S_r} [o(1)+k^2\theta^2(\theta-1)r^{2\theta-2}+\frac{1}{2}k\theta(1-\theta)(\theta-2)r^{\theta-2}\label{theta4}\\
   &&+\frac{1}{2}(\bar{\delta}+s_0)(\epsilon+o(1)+k(1-\theta)\theta r^{\theta-2}+k^2\theta^2r^{2\theta-2})]\bar{v}^2e^{-2\rho}dx\label{theta3}
 %  \nonumber  && +\int_{S_r}-\frac{r^s}{2}  q_1[-V_0-V_1-V_2+\lambda] v^2e^{-2\rho}dx.
\end{eqnarray}
with some $ \epsilon=\lambda-\frac{b^2}{4}>0$.
By the choice that $\theta$ is close to 1, one has
\begin{equation*}
  \eqref{theta4}+\eqref{theta3}>0
\end{equation*}
for large $r$ and $k$.

By Cauchy Schwartz inequality, we have
\begin{equation*}
   | \eqref{theta2}|<\eqref{theta1}+\eqref{theta4}+\eqref{theta3}.
\end{equation*}
Thus
\begin{equation}\label{last9}
    \frac{\partial r^{s_0}\bar{F}(r)}{\partial r}>0,
\end{equation}
for large $k$ (only depends on the bounds of constants in the assumptions of Theorem \ref{Mainthm1}).

By the assumption of \eqref{GInitial7}, there exists sequence  $r_j$ going to infinity
such that
\begin{equation*}
   \lim_{j} r_j^{s_0}\bar{F}(r_j)=0.
\end{equation*}
By the fact that     $ r^{s_0}\bar{F}(r)$ is monotone (using \eqref{last9}),
we have
\begin{equation*}
    \bar{F}(r)\leq 0
\end{equation*}
for $r> R$. Note that $R$ does not depend on $k$.
Fix  $r>R$, by \eqref{GInitial835}, one has
\begin{equation*}%\label{GInitial8}
    \bar{F}(r)=e^{2kr^{\theta}}\int_{S_{r}} [\frac{1}{2}(\bar{q}+k^2\theta^2r^{2\theta-2}) {v}^2+|\frac{\partial  {v}}{\partial r}|^2+k\theta r^{\theta-1}\frac{\partial  {v}}{\partial r} {v}-\frac{1}{2}|\nabla  {v}|]e^{-2\rho}dx\leq 0,
\end{equation*}
for all large $k$ and large $r$.
This is impossible except
\begin{equation*}
    \int_{S_r}|v|^2 e^{-2\rho}dx=0
\end{equation*}
for all large $r$.
By unique continuation theorem, we have $v\equiv0$.%, which is contradicted to the assumption that $u$ is nonzero.
\end{proof}
\section{The positivity of initial energy and proof of   Theorem \ref{Mainthm1}  }\label{proof1}
\begin{theorem}\label{thminitial}
Suppose
\begin{equation}\label{Gq}
q-\lambda+V_0+V_1+V_2\geq \frac{\epsilon}{r},
\end{equation}
for some $\epsilon>0$.
 We also assume
\begin{equation}\label{Ginitial2}
    \liminf_{r}\int_{S_r}|\frac{\partial v}{\partial r}v|e^{-2\rho}dx=0.
\end{equation}
Then for any $R>0$, there exists $r>R$
such that
\begin{equation*}%\label{Ginitial}
    F(r,0)>0.
\end{equation*}

\end{theorem}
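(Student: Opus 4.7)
The plan is to argue by contradiction: assume that $F(r,0) \leq 0$ for every $r > R$ and derive that $v \equiv 0$, contradicting nontriviality of the eigensolution $u$. The strategy has three phases: produce weakly exponential decay of $v$ via an ODE bootstrap, promote it to genuine exponential decay via Theorem \ref{Expthm1}, and conclude $v \equiv 0$ via Theorem \ref{Expthm2}.

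The heart of the matter is setting up the right differential inequality. I multiply the eigen-equation \eqref{equav} by $v e^{-2\rho}$ and integrate by parts over $\{t \leq r \leq T\}$. The first-order $\rho'$-contributions cancel and, after choosing $T = T_j$ along the sequence afforded by hypothesis \eqref{Ginitial2} so that the outer flux vanishes, I obtain
$$\int_{r > t}[|\nabla v|^2 - q_0 v^2]e^{-2\rho}dx = -\int_{S_t}\partial_r v \cdot v\, e^{-2\rho}d\sigma, \qquad t > R,$$
with $q_0 = \lambda - V_0 - V_1 - V_2$. Hypothesis \eqref{Gq} gives $q - q_0 \geq \epsilon/r$, and $F(r,0) \leq 0$ rearranges to $\int_{S_r}[|\nabla v|^2 - q v^2]e^{-2\rho}d\sigma \geq 2\int_{S_r}|\partial_r v|^2 e^{-2\rho}d\sigma$. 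Combining these three facts and setting
$$A(t) := 2\int_{r>t}|\partial_r v|^2 e^{-2\rho}dx + \epsilon\int_{r>t}\frac{v^2}{r}e^{-2\rho}dx,$$
I arrive at $A(t) \leq \int_{S_t}|\partial_r v \cdot v|\,e^{-2\rho}d\sigma$. A Cauchy--Schwarz followed by an AM--GM step, exploiting the matched structure $-A'(t) = 2\int_{S_t}|\partial_r v|^2 e^{-2\rho}d\sigma + (\epsilon/t)\int_{S_t}v^2 e^{-2\rho}d\sigma$, bounds the right-hand side by $\tfrac{1}{2}\sqrt{t/(2\epsilon)}(-A'(t))$, producing the inequality $A'(t) \leq -(2\sqrt{2\epsilon}/\sqrt{t})A(t)$ and hence $A(t) \leq C e^{-c\sqrt{t}}$.

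The remaining steps are routine. Using $(\epsilon/t)\int_{S_t}v^2 e^{-2\rho}d\sigma \leq -A'(t)$ and integrating by parts against the weight $e^{c''\sqrt t}$ with $0 < c'' < c$ gives $\int_{r>R}v^2 e^{-2\rho}e^{c''\sqrt r}dx < \infty$. A Caccioppoli estimate, obtained by multiplying \eqref{equav} by $v\chi^2 e^{-2\rho}e^{c''\sqrt r}$ for suitable annular cutoffs $\chi$, promotes this to $\int(|\nabla v|^2 + v^2)e^{-2\rho}e^{c''\sqrt r}dx < \infty$, which upon passing to $u = e^{-\rho}v$ is exactly the hypothesis of Theorem \ref{Expthm1}. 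That theorem yields true exponential decay $\int(|\nabla u|^2 + u^2)e^{cr}dx < \infty$, so the hypothesis of Theorem \ref{Expthm2} is satisfied for every $\sigma < 1$, forcing $v \equiv 0$.

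The main obstacle is the first phase. Lemma \ref{Keylemma3} cannot be invoked directly because it requires the stronger flux condition $\liminf \int_{S_r}r|\partial_r v \cdot v|e^{-2\rho}d\sigma = 0$, which is unavailable under the weak assumption \eqref{Ginitial2}; one must therefore perform the integration by parts at the level of the eigen-equation itself to obtain the identity with no $r$-weight. Once this is done, the interplay between the $v^2/r$ term produced by \eqref{Gq} and the $|\partial_r v|^2$ term produced by $F(r,0) \leq 0$ is precisely what calibrates the $t^{-1/2}$ scaling in the ODE, and hence the characteristic $e^{-c\sqrt r}$ weakly exponential decay rate.
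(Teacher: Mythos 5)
Your proposal is correct and takes essentially the same approach as the paper: assume $F(r,0)\le 0$ for all $r>R$, integrate the surface inequality over the end, combine it with the integration by parts of the eigen-equation (with the outer flux killed along the sequence provided by \eqref{Ginitial2}) and with \eqref{Gq} to obtain $G(t)\lesssim -\sqrt{t}\,G'(t)$, hence $e^{-c\sqrt{t}}$ decay, and then conclude $v\equiv 0$ via Theorems \ref{Expthm1} and \ref{Expthm2}. The only differences are cosmetic: you take the limit in the integrated identity before combining with $F\le 0$ and use a cutoff (Caccioppoli-type) argument for the weighted gradient bound, while the paper combines first and multiplies the equation by $e^{c\sqrt{r}}ve^{-2\rho}$ directly.
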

\begin{proof}

Otherwise $ F(r,0)\leq 0$ for all $r>R$.

By the assumption \eqref{Ginitial2}, there exists a sequence $r_j$ going to infinity such that
\begin{equation}\label{Gequ8}
    \int_{S_{r_j}}|\frac{\partial v}{\partial r}v|e^{-2\rho}dx=o(1).
\end{equation}
Integrating $F(r,0)$  from $r$ to $r_j$,
one has
\begin{equation}\label{Gequ6}
    \int_{r\leq r(x)\leq r_j}[qv^2 +|\frac{\partial v}{\partial r}|^2-|\nabla_\omega v|^2] e^{-2\rho}dx\leq 0.
\end{equation}
Using the eigen-equation \eqref{equav} and integration by part, one has
\begin{equation}\label{Gequ7}
    \int_{r\leq r(x)\leq r_j}  |\nabla v|^2   e^{-2\rho}dx+ \int_{S_{r}}\frac{\partial v}{\partial r}v e^{-2\rho}dx-\int_{S_{r_j}}\frac{\partial v}{\partial r}v e^{-2\rho}dx=\int_{r\leq r(x)\leq r_j} [\lambda-V_0-V_1-V_2]v^2e^{-2\rho}dx.
\end{equation}
Putting    \eqref{Gequ8} and \eqref{Gequ7} into  \eqref{Gequ6}, and letting $r_j$ go to infinity, we have
\begin{equation*}%\label{GInitial1}
    \int_{S_{r}}\frac{\partial v}{\partial r}v e^{-2\rho}dx +\int_{r(x)\geq r}[(q-\lambda+V_0+V_1+V_2)v^2+ 2|\frac{\partial v}{\partial r}|^2 ]e^{-2\rho}dx\leq 0.
\end{equation*}
By the assumption \eqref{Gq}, we have
\begin{equation}\label{GInitial1235}
    \int_{S_{r}}\frac{\partial v}{\partial r}v e^{-2\rho}dx +\int_{r(x)\geq r}[\frac{\epsilon}{r}v^2+ 2|\frac{\partial v}{\partial r}|^2 ]e^{-2\rho}dx\leq 0.
\end{equation}
Let
\begin{equation*}%\label{GdefineG}
   G(r)=\int_{r(x)\geq r}[ \frac{\epsilon}{r}v^2+ 2|\frac{\partial v}{\partial r}|^2 ]e^{-2\rho}dx.
\end{equation*}
By \eqref{GInitial1235} and Cauchy-Schwartz inequality,
one has
\begin{equation}\label{GInitial2}
  G(r)\leq -C \sqrt{r}  \frac{d G}{dr}.
\end{equation}
where $C>0$ is a constant depending on $\epsilon$.
Solving \eqref{GInitial2}, one has
\begin{equation*}%\label{GInitial3}
    G(r)\leq e^{-c \sqrt{r}}
\end{equation*}
for large $r$.
This yields that
\begin{equation}\label{GInitial5}
  \int_{r \geq r_0}[ v^2+ 2|\frac{\partial v}{\partial r}|^2 ]e^{c\sqrt{r}}e^{-2\rho}dx<\infty,
\end{equation}
for  some $c>0$,
and
\begin{equation*}%\label{GInitial6}
  \int_{r\geq r_0}|v\frac{\partial v}{\partial r}| e^{c\sqrt{r}}e^{-2\rho}dx<\infty.
\end{equation*}
Multiplying $e^{c\sqrt{r}} v e^{-2\rho}$  in \eqref{equav} and integration by part,
one has
\begin{equation}\label{GInitial34}
   \int_{r \geq r_0}|\nabla v|^2e^{c\sqrt{r}}e^{-2\rho}dx<\infty.
\end{equation}
By  Theorems \ref{Expthm1}, \ref{Expthm2} and \eqref{GInitial5}, \eqref{GInitial34}, we have
$v\equiv 0$, which is contradicted to our assumption.

\end{proof}

Recall that
\begin{equation*}
     \bar{\delta} (r) = r(\Delta r-b-\frac{c}{r}).
\end{equation*}
Let $q=\lambda-\frac{b^2}{4}-\frac{bc}{2r}+\frac{b(\delta\pm\epsilon)}{2r}-V_2$, where $\pm$ is defined such that $b(\pm\epsilon)>0$.
Thus \eqref{Gq} holds.
Now by \eqref{Gpartialnew}, the derivative of the energy function for $v$ has the following form,
\begin{eqnarray}
% \nonumber to remove numbering (before each equation)
  \label{Gpartialcase135} \frac{\partial F( r,s)}{\partial r} %&=& \frac{\partial{\rm I}}{\partial r} +\frac{\partial {\rm II}}{\partial r}+\frac{\partial{\rm III}}{\partial r}\\
   &=& r^{s-1}\int_{S_r}[ r(\nabla dr) -(\frac{s}{2 }+\frac{1}{2}\bar{\delta})\hat{g})(\nabla v,\nabla v)]e^{-2\rho}dx\\
 \label{Gpartialcase235} &&+r^{s-1}\int_{S_r}[  -\frac{\bar{\delta}}{2}+\frac{s}{2}+o(1)]|\frac{\partial v}{\partial r}|^2e^{-2\rho}dx\\
   %&&+[-r^s(-V_0-V_1-V_2+\lambda)]\frac{\partial v}{\partial r}v\\
   %&&+[r^{s-1}(\)]\frac{\partial v}{\partial r}v\\
   %&& + [\frac{1}{2} q_1r^s  (\Delta r-2\rho^{\prime})+sr^{s-1}\frac{1}{2} q_1+(\frac{1}{2}r^s\frac{\partial q_1}{\partial r})]\frac{\partial v}{\partial r}v\\
  \label{Gpartialcase335} &&+r^{s-1}\int_{S_r}[ \frac{\bar{\delta} b}{2} +\frac{ {(\delta\pm\epsilon)} b}{2}]\frac{\partial v}{\partial r}ve^{-2\rho}dx\\
  \label{Gpartialcase435} && +r^{s-1}\int_{S_r}[(\lambda-\frac{b^2}{4})(\frac{s}{2}+\frac{\bar{\delta}}{2})+o(1)]v^2e^{-2\rho}dx.
  % &&+\frac{s-1}{2}mr^{2s-2}(2\rho^{\prime}-\Delta r)+\frac{m}{2}r^{s-1}(2\rho^{\prime\prime}-\frac{\partial \Delta r}{\partial r})\\
  % \label{Gpartialcase5}&&+r^{s-1}\int_{S_r} \frac{m(m+1)}{r^2}[ \frac{s-2}{2}+\frac{\bar{\delta}}{2}]v_m^2 e^{-2\rho}dx
\end{eqnarray}
%Let $t=1$.
\begin{theorem}\label{Thmpositivezero}
Let  $s<\mu$ and $s$ be sufficiently close to $\mu$. Then
under the assumptions  of Theorem \ref{Mainthm1},
 we have
 \begin{equation*}
    \frac{\partial F( r,s)}{\partial r} >0,
 \end{equation*}
 for large $r$.
\end{theorem}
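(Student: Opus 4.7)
The plan is to read off from \eqref{Gpartialcase135}--\eqref{Gpartialcase435} four integrands and verify, term by term, that their sum is (pointwise) non-negative for large $r$. The first integrand, involving $\nabla v$, is handled directly by the curvature hypothesis \eqref{ajun}. The remaining three -- a kinetic term in $|\partial_r v|^2$, a cross term in $(\partial_r v) v$, and a potential term in $v^2$ -- will be bundled into a single quadratic form in $(\partial_r v, v)$, whose positive semidefiniteness will be precisely condition \eqref{Gconl} in the limit $s \to \mu^-$.

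For the gradient term \eqref{Gpartialcase135}, recall $|\bar\delta|\le \delta + o(1)$, so the assumption \eqref{ajun} together with $2a > \mu + \delta$ (from \eqref{Gcons}) lets us pick $s$ close to but below $\mu$ with $s < 2a - \delta$; then
\begin{equation*}
   r(\nabla dr) - \bigl(\tfrac{s}{2} + \tfrac{\bar\delta}{2}\bigr)\hat g \;\ge\; \bigl(a - \tfrac{s}{2} - \tfrac{\delta}{2} - o(1)\bigr)\hat g \;\ge\; 0,
\end{equation*}
for $r$ large. So \eqref{Gpartialcase135} contributes non-negatively.

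For the remaining three lines I would identify the coefficients (up to $o(1)$) as
\begin{equation*}
   A = \tfrac{s-\bar\delta}{2},\qquad 2B = \tfrac{b(\bar\delta + \delta \pm \epsilon)}{2},\qquad C = (\lambda - \tfrac{b^2}{4})\tfrac{s+\bar\delta}{2},
\end{equation*}
so the integrand becomes $A|\partial_r v|^2 + 2B(\partial_r v)v + Cv^2$. Since $\mu > \delta$, with $s$ close to $\mu$ one has $A \ge (s-\delta)/2 > 0$ and $s + \bar\delta \ge s - \delta > 0$, hence (using $\lambda > b^2/4$) also $C > 0$. Positive semidefiniteness reduces to $AC \ge B^2$, i.e.,
\begin{equation*}
   (s^2 - \bar\delta^2)\bigl(\lambda - \tfrac{b^2}{4}\bigr) \;\ge\; \tfrac{(\bar\delta + \delta \pm \epsilon)^2 b^2}{4}.
\end{equation*}
The worst case over $\bar\delta \in [-\delta,\delta]$ occurs at $\bar\delta = \pm\delta$ (with the sign matching the $\pm\epsilon$ convention, chosen so that $b(\pm\epsilon) > 0$); the left side is minimized and the right maximized there, giving in the limit
\begin{equation*}
   (\mu^2 - \delta^2)\bigl(\lambda - \tfrac{b^2}{4}\bigr) \;\ge\; \delta^2 b^2,
\end{equation*}
which is exactly the hypothesis \eqref{Gconl}. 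By the strict inequality there, one can fix $s < \mu$ close to $\mu$ and $\epsilon > 0$ small so that $AC - B^2 \ge c_0 > 0$ uniformly in $\bar\delta$; then the quadratic form is bounded below by $c_0'(|\partial_r v|^2 + v^2)$ pointwise.

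Combining the two pieces yields $\partial_r F(r,s) \ge c_0'\, r^{s-1}\int_{S_r}(|\partial_r v|^2 + v^2)e^{-2\rho}dx$ for $r$ large, and this integral is strictly positive (the alternative $v \equiv \partial_r v \equiv 0$ on a whole end would force $v \equiv 0$ by unique continuation). The main delicate point is the worst-case analysis in the last paragraph: one must simultaneously minimize $s^2 - \bar\delta^2$ and maximize $(\bar\delta + \delta \pm \epsilon)^2$ in $\bar\delta$, and verify that the sign choice for $\pm \epsilon$ relative to $\mathrm{sgn}(b)$ makes the two extrema compatible and yields exactly \eqref{Gconl}; the remaining work is routine bookkeeping of $o(1)$ terms.
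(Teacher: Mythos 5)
Your proposal is correct and follows essentially the same route as the paper: the gradient line is made non-negative via \eqref{ajun} together with $2a>\mu+\delta$, and the remaining three lines are bundled into a quadratic form in $(\partial_r v,v)$ whose discriminant condition, after taking the worst case in $\bar\delta$ and letting $s\to\mu^-$, $\epsilon\to 0$, is exactly \eqref{Gconl} (the paper phrases this as Cauchy--Schwarz plus ``solving'' the inequality \eqref{Gsolveeq}). One cosmetic point: the binding value is $\bar\delta=+\delta$ for either sign of $b$ (at $\bar\delta=-\delta$ the cross term is only of size $\epsilon$), but since $(\bar\delta+\delta\pm\epsilon)^2\le(2\delta+\epsilon)^2$ and $s^2-\bar\delta^2\ge s^2-\delta^2$ uniformly, your stated bound and conclusion are unaffected.
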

\begin{proof}
By the assumptions of Theorem \ref{Mainthm1} and $s$ is close to $\mu$, one has
\begin{equation*}
   |\bar{\delta}+s|<2a
\end{equation*}
and
\begin{equation*}
   \bar{\delta}+s >0.
\end{equation*}
By Cauchy Schwartz inequality and \eqref{Gpartialcase135}-\eqref{Gpartialcase435}, it suffices to prove
\begin{equation}\label{Gsolveeq}
    4[(\lambda-\frac{b^2}{4})(\frac{s}{2}+\frac{\bar{\delta}}{2})][-\frac{\bar{\delta}}{2}+\frac{s}{2}]>
    |\frac{\bar{\delta} b}{2} +\frac{ {(\delta\pm\epsilon)} b}{2}|^2.
\end{equation}
Solving inequality \eqref{Gsolveeq},
we get
\begin{equation}\label{Gsolveeq1}
     \lambda>\frac{b^2}{4}+\frac{1}{4}\frac{(\bar{\delta}+\delta\pm\epsilon)^2b^2}{s^2-\bar{\delta}^2}.
\end{equation}
It is clear that \eqref{Gsolveeq1} holds if
\begin{equation*}%\label{Gsolveeq2}
     \lambda>\frac{b^2}{4} + \frac{\delta^2b^2}{s^2-{\delta}^2},
\end{equation*}
since $\epsilon$ is small.
This follows from  the  assumption \eqref{Gconl} and the fact that $s$ is close to $\mu$.
\end{proof}

{\textbf{Proof of Theorem \ref{Mainthm1}}.}
\begin{proof}
It suffices to  assume
\begin{equation*}
 \liminf_{r}\int_{S_r}|\frac{\partial v}{\partial r}v|e^{-2\rho}dx=0
\end{equation*}
so that \eqref{Ginitial2} holds.
By Theorem \ref{thminitial}, there exists a sequence $r_n$ such that
\begin{equation*}
    F(r_n,0)>0.
\end{equation*}
%for large $r$.
By Theorem \ref{Thmpositivezero},
 there exists $\gamma>0$ such that
\begin{equation*}
    F( r, s)\geq\gamma,
\end{equation*}
for large $r$.
By the fact  $v=e^{\rho}u$, we get that
\begin{equation*}
   \liminf_{r\to \infty} r^{s} [M(r)^2+N(r)^2]>0 .
\end{equation*}
Recall that   $s<\mu$,  we have
\begin{equation*}
   \liminf_{r\to \infty} r^{\mu} [M(r)^2+N(r)^2]=\infty .
\end{equation*}
\end{proof}

 \section*{Acknowledgments}

 %The authors are very grateful to the anonymous referees for their knowledgeable reports, which helped us to improve our manuscript.
%The work of S.J. was supported by the Simons Foundation.
I would like to thank Svetlana Jitomirskaya for introducing to me paper \cite{kumura2010radial} and
inspiring discussions on this subject. I would also like to thank    the  anonymous referee for careful reading of the manuscript that has led to an important improvement, in particular, telling me the  work of Ito and Skibsted.
Finally, I  thank Professor Ito for helpful discussions.
  The author  was supported by the AMS-Simons Travel Grant 2016-2018 and  NSF DMS-1700314. This research was also
partially supported by NSF DMS-1401204.

 %We are grateful to the Isaac
  %  Newton Institute for Mathematical Sciences, Cambridge, for support
   % and hospitality during the programme Periodic and Ergodic Spectral
   % Problems where this work was started.
 %Department of Mathematics, University of California at
  %Irvine for warm hospitality, especially thank Professor Martin Schechter for constant encouragement and helpful discussions.

\footnotesize

\end{document}